%
%

\def\Bbb{\mathbb}

\def\dist{{\rm{dist}}}

\def\dist{{\rm{dist}}}

\def\logcap{{\rm{cap}}}
\def\Bbb{\mathbb}

\def\complex{\Bbb C}
\def\disk{\Bbb D}
\def\circle{\Bbb T}

\def\disk{\Bbb D}
\def\cal{\mathcal}

\documentclass[12pt]{amsart}  
\usepackage{amssymb}    

\usepackage{graphicx,amssymb,amsthm,verbatim,amsfonts}

\theoremstyle{plain}                    
\newtheorem{thm}{Theorem}[section]
\newtheorem{cor}[thm]{Corollary}
\newtheorem{lemma}[thm]{Lemma}

\newcounter{ques}
\newtheorem{ques}[thm]{Question}   
\numberwithin{equation}{section}

\addtolength{\textwidth}{1.0in}
\addtolength{\leftmargin}{-.5in} 
\hoffset -.5in

\begin{document}
\baselineskip=18pt


%

\title  [True trees are dense] 
         {True trees are dense}
\subjclass{Primary: 30C62  Secondary: }
\keywords{Generalized Chebyshev polynomials, Shabat polynomials, 
  true trees, dessins d'enfants, Hausdorff metric
  critical points, critical values, conformally balanced trees, 
  quasiconformal maps, measurable Riemann mapping theorem}

\author {Christopher J. Bishop}
\address{C.J. Bishop\\
         Math. Dept. \\
         Stony Brook University \\
         Stony Brook, NY 11794-3651}
\email {bishop@math.sunysb.edu}
\thanks{The  author is partially supported by NSF Grant DMS 13-05233.
        }

\date{August 2013}
\maketitle


\begin{abstract}
We show that any compact, connected set $K$ in the plane can 
be approximated by the critical points of a polynomial with 
two critical values. Equivalently, $K$ can be approximated
in the Hausdorff metric 
by a true tree in the sense of Grothendieck's {\it dessins d'enfants}.
\end{abstract}

\clearpage


\setcounter{page}{1}
\renewcommand{\thepage}{\arabic{page}}
\section{Introduction} \label{Intro} 

Polynomials with at most two critical values are
called generalized Chebyshev polynomials or Shabat polynomials.
If $p$ is such a polynomial  of degree $n$  
with  critical values in  $\{\pm 1 \} $,  
then it is not hard to see that 
$T = p^{-1}([-1,1])$ is a finite planar tree with $n$
edges. We call a tree of this form a ``true tree''
or the ``true form'' of the combinatorial planar tree $T$.
True trees can have all possible combinatorics, i.e., 
every finite planar tree has a true 
form and this true form is unique  up to orientation
preserving  Euclidean similarities
(see Section 2).
 Can true trees attain all possible  ``shapes''?  More
precisely, given a continuum (i.e., a compact, connected set) in 
the plane, can we find a true tree that approximates it as 
closely as we wish?
The Hausdorff distance between two sets  is the minimum $\epsilon>0$ 
so that  each set is contained in an $\epsilon$-neighborhood of the other.
In this note we prove:

\begin{thm} \label{poly}
For any compact, connected set $K \subset \complex$ and any $\epsilon >0$ there
is a polynomial $p(z)$ with critical values exactly $\pm 1$  so that 
$T = p^{-1}([-1,1])$ 
approximates $K$ to within $\epsilon$ in the Hausdorff metric.
In other words, true trees are dense in all  planar continua.
\end{thm}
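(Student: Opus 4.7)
The plan is to reduce, in two steps, to perturbing a finite planar tree by a quasiconformal homeomorphism of the sphere with small dilatation, following the strategy suggested by the paper's keywords (conformally balanced trees, quasiconformal maps, measurable Riemann mapping theorem). First I would approximate $K$ by a finite polygonal tree $T_0 \subset \complex$ with $d_H(K, T_0) < \epsilon/2$. Any planar continuum admits such a tree: cover $K$ by finitely many disks of radius $\epsilon/4$, form a connected graph on their centers lying in a small neighborhood of $K$, and extract a spanning tree. The problem is thereby reduced to showing that any finite planar tree admits a Hausdorff approximation to within $\epsilon/2$ by a true tree.

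Next I would invoke the conformal characterization of true trees. Since $\hat\complex \setminus T$ is simply connected for a finite embedded tree $T$, let $\Phi_T$ be the Riemann map onto $\hat\complex \setminus \overline{\disk}$ normalized at $\infty$, and set $J(w) = \tfrac{1}{2}(w + w^{-1})$; then $J \circ \Phi_T$ is the unique candidate for the restriction to $\hat\complex \setminus T$ of a Shabat polynomial whose critical tree is $T$. This candidate actually extends to a polynomial precisely when $T$ is \emph{conformally balanced}, i.e.\ when the two sides of every edge correspond under $\Phi_T$ to arcs of $\partial \disk$ of equal length. For a generic $T_0$ this balance fails.

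I would then correct the imbalance by quasiconformal surgery. On a thin tubular neighborhood of each edge $e$ of $T_0$, I would design a Beltrami coefficient $\mu_e$ whose effect is to rescale one side of $e$ relative to the other by the factor needed to equalize the two harmonic measures. These local pieces glue into a global Beltrami coefficient $\mu$ supported in a small neighborhood of $T_0$; solving $\bar\partial \Psi = \mu\, \partial \Psi$ by the measurable Riemann mapping theorem yields a quasiconformal homeomorphism $\Psi$ of $\hat\complex$ such that $T_1 := \Psi(T_0)$ is conformally balanced by construction, and hence is a true tree.

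The main obstacle, and what I expect to be the hardest step, is quantitative: $\Psi$ must move no point by more than $\epsilon/2$. The plan is to first refine $T_0$ by subdividing each edge into many short subedges whose positions are chosen according to a balanced scheme, so that on each subedge the two-sided harmonic measures nearly agree; this refinement does not change $T_0$ as a set, hence is free in Hausdorff distance. Fine enough refinement should make both $\|\mu\|_\infty$ and $\diam(\supp \mu)$ arbitrarily small, and standard continuous dependence of solutions of the Beltrami equation on their coefficient then yields $\|\Psi - \mathrm{id}\|_\infty < \epsilon/2$, giving $d_H(T_0, T_1) < \epsilon/2$ and completing the approximation. The delicate points are the precise balanced-subdivision scheme and the quantitative estimate linking Beltrami norm to identity-proximity of the integrated homeomorphism, where I would expect the bulk of the technical work to lie.
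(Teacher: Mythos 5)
Your first two steps (polygonal tree approximation of $K$, and reduction to the conformally balanced characterization) agree with the paper and are sound. The gap is in the quasiconformal correction step, and it is a genuine one: the claim that ``fine enough refinement should make $\|\mu\|_\infty$ arbitrarily small'' is false. Here is a clean counterexample. Take $T_0$ to be an L-shaped arc. Topologically $T_0$ is an interval, and any subdivision of it into subedges is still a simple chain of edges. The unique true tree (up to affine maps) with chain combinatorics of length $n$ is the Chebyshev-subdivided segment $[-1,1]$, because the associated Shabat polynomial is the degree-$n$ Chebyshev polynomial. Hence any homeomorphism $\Psi$ of $\hat\complex$ with $\Psi(T_0)$ conformally balanced must carry the L onto a straight segment. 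That is a definite deformation, uniformly bounded below in $\sup$-norm, completely independent of the subdivision parameter. So $\|\Psi-\mathrm{id}\|_\infty$ cannot be made small, and in particular $\|\mu\|_\infty$ cannot tend to $0$. The underlying reason is that the ratio of harmonic-measure densities on the two sides of an edge of $T_0$ is a pointwise geometric invariant of the set $T_0$; subdividing the edge refines the combinatorics but does not change this ratio at all, so no ``balanced subdivision scheme'' can make the two one-sided measures nearly agree as measures, only as totals, which is not what the balance condition requires.

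What is actually needed, and what the paper supplies, is a modification of the \emph{set}, not just the combinatorics: the paper builds a larger tree $T' \supset T$ by attaching a comb of short perpendicular segments to each edge of $T$, with exponentially more teeth on the side of lower harmonic measure (the imbalance for an $\epsilon$-grid tree can be of order $2^{O(1/\epsilon)}$). These extra leaves absorb the imbalance, and one shows $T'$ is QC-balanced via a map $\psi\circ f\circ\phi$ that has \emph{uniformly bounded}, not small, quasiconstant. Closeness to the identity is then obtained by a different mechanism than the one you propose: the dilatation is supported in an arbitrarily thin neighborhood of $T$, and Lemma~\ref{QC 2} (compactness of normalized $K$-quasiconformal maps, with the dilatation vanishing off a small-measure set) gives $\|\Psi-\mathrm{id}\|_\infty\to 0$ as the support shrinks, even though $\|\mu\|_\infty$ stays bounded away from $0$. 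Your continuity-in-coefficient argument and the paper's small-support compactness argument are both valid tools in principle, but only the latter is available here, because only the latter has hypotheses that the construction can actually satisfy. To repair the proposal you would need to replace ``subdivide the edges'' with ``attach a carefully designed brush of new leaves,'' at which point you would be reproducing the paper's construction in Section~4.
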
 

True trees are 
a special case of Grothendieck's theory of {\it dessins 
d'enfants} in which a finite graph drawn on a compact
topological surface  
$X$ induces a conformal structure on the surface and a  Belyi
map to the  Riemann sphere (i.e., a meromorphic map 
branched over three points). 
In the case of a tree drawn on the plane, the compact 
surface is the Riemann sphere and  the Belyi map is a polynomial 
with two finite critical values ($\infty$ is the third branch
point). These maps have close connections to algebraic number
theory and Galois theory, although  we will not deal with those 
topics here.   There is an extensive
 literature on dessins d'enfants, 
true trees and Belyi functions, e.g.,  see  
\cite{MR2053391},
\cite{MR2349672},
 \cite{MR1746434}, 
 \cite{MR2476033},
\cite{MR2476034},
\cite{MR2411966},
\cite{MR1625545}, 
\cite{MR1305390},
\cite{MR1310587},
\cite{MR2310190}
 and their references.

Our approach  to proving Theorem \ref{poly} 
 is based on  interpreting true trees in terms of 
conformal maps. We will describe this alternate formulation
and reduce the  theorem to a more geometric sounding 
statement.

Suppose $T$ is a finite tree in the plane  with $n$ edges. Then the 
complement  $\Omega$ of $T$ is the image 
 of a  conformal map $f$ from $\disk^* = \{ |z| >1\}$ to   $\Omega$
with $f(\infty) = \infty$. We say that $T$ is ``conformally 
balanced'' if every  open edge of the tree is the image 
under $f$ of two disjoint open arcs of length $\pi/n$ on $\partial \disk^*$,
and   
$f(z) = f(w) $ implies $f'(z) = f'(w)$ for almost every 
$z, w \in \circle$. 
Because conformal maps preserve harmonic measure, the 
conformally balanced condition can be restated in terms 
harmonic measure with respect to $\infty$ on $T$ (i.e., 
the first hitting distribution of Brownian motion on the 
sphere started at $\infty$ and run until it hits $T$).
On each edge of the tree, harmonic 
measure naturally decomposes as the sum of two measures, one
corresponding to each side of the edge. The tree is 
conformally  balanced if (1)  every edge has the same harmonic 
measure, and (2) when we decompose harmonic measure on each edge 
into measures corresponding to the two  sides, 
 these two measures 
are identical. Note that we mean that these measures
 give the same mass
to every measurable subset of the edge, not merely  that the 
whole edge gets equal harmonic measure from both sides.

To see that conformally balanced trees are exactly the same as 
the true trees described above, 
suppose $T$ is a conformally balanced tree and let
 $f$ be  a conformal map from  $\disk^* = \{|z| >1\}$
to  $\Omega = \complex \setminus T$,
 preserving $\infty$ and such that 
 $1 \in \circle =\{|z|=1\} = \partial \disk^*$
 maps to a vertex.
Let $g(z) = \frac 12(z + z^{-1})$. This is called 
the Joukowsky map and is   the 
  conformal map from $\disk^*$ to  
 $ U = \complex \setminus [-1,1]$  that  fixes $-1,1,\infty$.
Each edge of $T$ has two  preimages under $f$ 
 of length $ \pi/d$ on $\circle$. 
Under the map $z \to z^d$ each interval is mapped to either the upper 
or lower half-circle and pairs of intervals corresponding to the same 
edge of the tree map to opposite half-circles. Points that are 
mapped by $f$ to the same point are also identified by $g$. Thus the 
map $g((f^{-1}(z))^d) $ defines a $d$-to-$1$ holomorphic map from the 
complement of the tree to the complement of $[-1,1]$. 
This map extends continuously 
to the whole plane and hence is a $d$-to-$1$  entire 
function  (see Lemma \ref{removable})
 and hence is a polynomial.
The critical points of $p$ are the vertices of degree $>1$  of
the  tree, the only critical 
values are $-1$ and $1$ and the tree itself is $p^{-1}([-1,1])$. 
See Figure \ref{Pcovers3}. 
The argument can be reversed, so  we see that true trees are 
the same as conformally balanced trees.
Thus Theorem  \ref{poly}  can be rewritten as

\begin{thm} \label{Main}
For any compact, connected set $K$ and any $\epsilon >0$ there
is a conformally  balanced tree $T$ that is within $\epsilon$ of 
$K$ in the Hausdorff metric.
\end{thm}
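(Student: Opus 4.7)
The plan is to approximate $K$ first by a polygonal combinatorial tree and then correct that tree by a small quasiconformal deformation into a conformally balanced one. To start, I would fix a grid of mesh much finer than $\epsilon$, select the grid cells that meet $K$, and use their adjacency graph to build a connected polygonal tree $T_0$ that is $\epsilon/2$-close to $K$ in the Hausdorff metric. The edges of $T_0$ are short straight segments, and by subdividing if necessary we may assume all edges have comparable length. The task is then to perturb $T_0$ slightly into a conformally balanced tree $T$.

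Let $f_0 \colon \disk^* \to \complex \setminus T_0$ be the exterior conformal map fixing $\infty$. Generically, harmonic measure on $T_0$ (viewed from $\infty$) is not balanced: the two sides of each edge receive different measures, and distinct edges receive different totals. The idea is to build a homeomorphism $\phi$ of $\circle$ that redistributes arcs so that, under $f_0 \circ \phi^{-1}$, every edge is covered by two arcs of length $\pi/n$ and the two arcs mapping to the same edge correspond under the natural edge-involution. Extending $\phi$ to a quasiconformal self-map of $\disk^*$ supported in a thin collar of $\circle$ produces a Beltrami coefficient $\mu$; solving $\bar\partial \Phi = \mu \, \partial \Phi$ via the measurable Riemann mapping theorem yields a global quasiconformal map $\Phi$ of the plane whose image $T = \Phi(T_0)$ is conformally balanced by construction. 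The associated Shabat polynomial is obtained by pushing forward the map $g \circ ((f_0 \circ \phi^{-1})^{-1})^{n}$ via $\Phi$, as explained before the theorem.

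The main obstacle is quantitative: making the Hausdorff distance between $T$ and $T_0$ small. This requires bounding the dilatation of $\phi$ (equivalently, of $\mu$) in terms of the geometry of $T_0$, and then showing that $\Phi$ is uniformly close to the identity on a neighborhood of $T_0$. The dilatation is governed by how much the boundary circle reparameterization moves arcs, which in turn reflects the failure of harmonic measure on $T_0$ to be balanced. To control this, $T_0$ must be engineered so that it is already nearly balanced: one refines the polygonal tree by adding short auxiliary edges (``hairs'') or sliding vertices until each edge carries nearly equal harmonic measure on both sides. Proving that such a nearly balanced polygonal approximation exists for any compact connected $K$, together with the estimate that the resulting $\Phi$ moves points by at most $\epsilon$, is the heart of the argument.

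Granted these ingredients, $T = \Phi(T_0)$ is a conformally balanced tree satisfying $\mathrm{dist}_H(T, K) \le \mathrm{dist}_H(T, T_0) + \mathrm{dist}_H(T_0, K) = O(\epsilon)$. Rescaling the initial $\epsilon$ gives Theorem \ref{Main}, and the equivalence between conformally balanced trees and true trees established immediately above the theorem statement then delivers Theorem \ref{poly} as well.
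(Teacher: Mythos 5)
The central difficulty that your proposal defers --- ``proving that such a nearly balanced polygonal approximation exists for any compact connected $K$ $\ldots$ is the heart of the argument'' --- is precisely where your plan breaks down, and the paper handles it in a fundamentally different way.

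The problem is that for a generic continuum $K$ (think of a long, folded, or spiral-shaped set), the grid tree $T_0$ you build has edges whose two sides carry harmonic measures differing by factors like $2^{O(1/\epsilon)}$: an ``inner'' edge deep inside a folded region is nearly invisible from $\infty$ on one side. No small perturbation --- sliding vertices, adding a few short hairs --- can equalize such exponential discrepancies while keeping $T_0$ Hausdorff-close to $K$. Moreover, your plan of extending the boundary reparameterization $\phi$ to a QC self-map of $\disk^*$ ``supported in a thin collar'' with controlled dilatation is not available: the quasiconstant of any QC extension is bounded below by the quasisymmetric constant of $\phi|_\circle$, which is enormous exactly because the harmonic measure is so unbalanced. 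Shrinking the collar does not help; if anything it worsens the dilatation. So the MRMT correction $\Phi$ built from your $\mu$ has no reason to be close to the identity, and Lemma~\ref{QC 2} cannot be invoked, since that lemma requires the quasiconstant $K$ to be fixed while only the \emph{area} of the support of the dilatation shrinks.

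The paper does not try to make the grid tree nearly balanced, nor does it reparameterize the circle to equalize harmonic measure. Instead it \emph{accepts} the imbalance and compensates by enlarging the tree: it constructs a bigger tree $T' \supset T$, contained in a $\delta$-neighborhood of $T$, by attaching along each side of each edge a rectangle or trapezoid whose \emph{height} is proportional to $m|K|$, where $2^{m}$ records (on a logarithmic scale) the harmonic measure of that side. The long thin rectangles on the ``starved'' side boost its harmonic measure to match the other side, and the trapezoids and tip maps interpolate between neighboring heights. This produces a QC-balanced tree $T'$ with a great many more edges than $T_0$ --- the new tree has different combinatorics, not merely a different embedding. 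Crucially, the composed map $\psi\circ f\circ\phi$ realizing the QC-balance has quasiconstant bounded \emph{independently} of $\delta$ (each local piece is a fixed square-to-rectangle or square-to-trapezoid model map), while the support of its dilatation lies in a $\delta$-neighborhood of $T$. Only then does Lemma~\ref{QC 2} apply, sending $\delta\to 0$ to make the MRMT correction arbitrarily close to the identity. Your instinct to add auxiliary edges is the right one, but the specific engineering of those additions --- dyadic heights $m$ drawn from the piecewise-linearized harmonic measure, matching conditions at shared sides, and tip maps at degree-one vertices --- \emph{is} the proof, and it replaces rather than supplements the ``nearly balanced $T_0$'' idea.
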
 

\begin{figure}[htb]
\centerline{
	\includegraphics[height=3in]{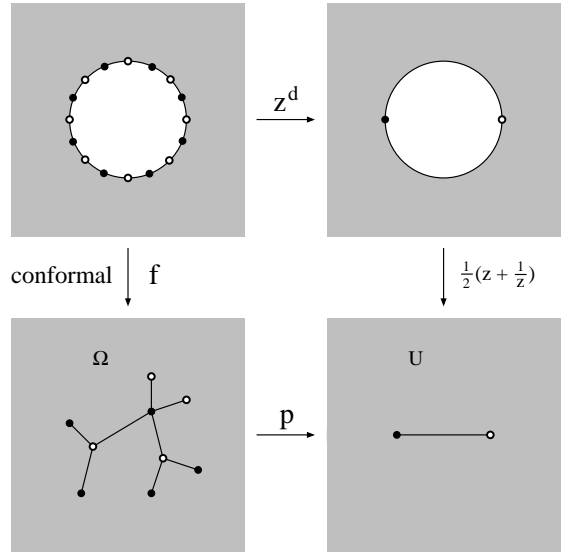}
}
\caption{ \label{Pcovers3}
For any tree $T$, the composition of  the conformal map 
from $\Omega = \complex \setminus T$  to $\{|z|>1\}$, 
followed by $z^d$, followed by $\frac 12  (z +\frac 1z)$ is 
$d$-to-$1$ and holomorphic off $T$. $T$ is conformally 
balanced (i.e., is a true tree) iff  this map extends 
continuously across $T$ and hence defines a polynomial
with critical values in $\{-1,1\}$. 
}
\end{figure}

Theorems  \ref{poly} and \ref{Main} were conjectured
by Alex Eremenko. I thank him for the enlightening discussion of 
these problems during his visit to Stony Brook in March 2011.
I thank Lasse Rempe for his comments on an earlier draft of this 
note.  I also thank the referee for a careful reading of the 
manuscript and numerous corrections and suggestions for 
improving the paper.

 Kevin Pilgrim has observed that the results in this paper, 
combined with his  arguments  
in \cite{MR1834499}, prove that Julia sets
of post-critically finite polynomials are dense in all 
planar continua.  The details will appear in \cite{Bishop-Pilgrim}. 
A related result was given using different 
methods by Kathryn Lindsey and William Thurston in 
\cite{Lindsey-Thurston}.

For the Shabat polynomials $p_n(z)= 2 z^n -1$, the only critical 
point is $0$, whereas the corresponding trees $T_n= p_n^{-1}([-1,1])$ become 
dense in the  unit disk as $n$ increases. Thus it is possible for  a 
sequence of true trees 
to approximate a set $K$, but the corresponding sets of 
 critical points not to approximate $K$. However,  
it will be clear from our construction that the set $K$ in 
the theorems  is 
approximated by  both a true tree and  its corresponding
set of critical points (i.e.,  the 
vertices of $T$ of degreee $> 1$).
Moreover, the trees we construct will  have a number 
of ``bounded geomery'' properties, e.g., the  maximum  vertex
degree  is $4$  and the edges are all analytic arcs 
with uniform  estimates (i.e., each edge is  the 
image of $I=[-1,1]$ under a map that is conformal on 
a uniform neighborhood $\Omega$ of $I$).

Don Marshall and Steffen Rohde  have recently  adapted
Marshall's conformal mapping program \verb+zipper+ to 
approximate the true form of a given planar tree, 
\cite{Marshall-Rohde}.
 The program can handle 
examples with thousands of edges and is highly accurate. 
See Figure \ref{Marshall} for some examples.

\begin{figure}
\centerline{
\includegraphics[height=2.25in]{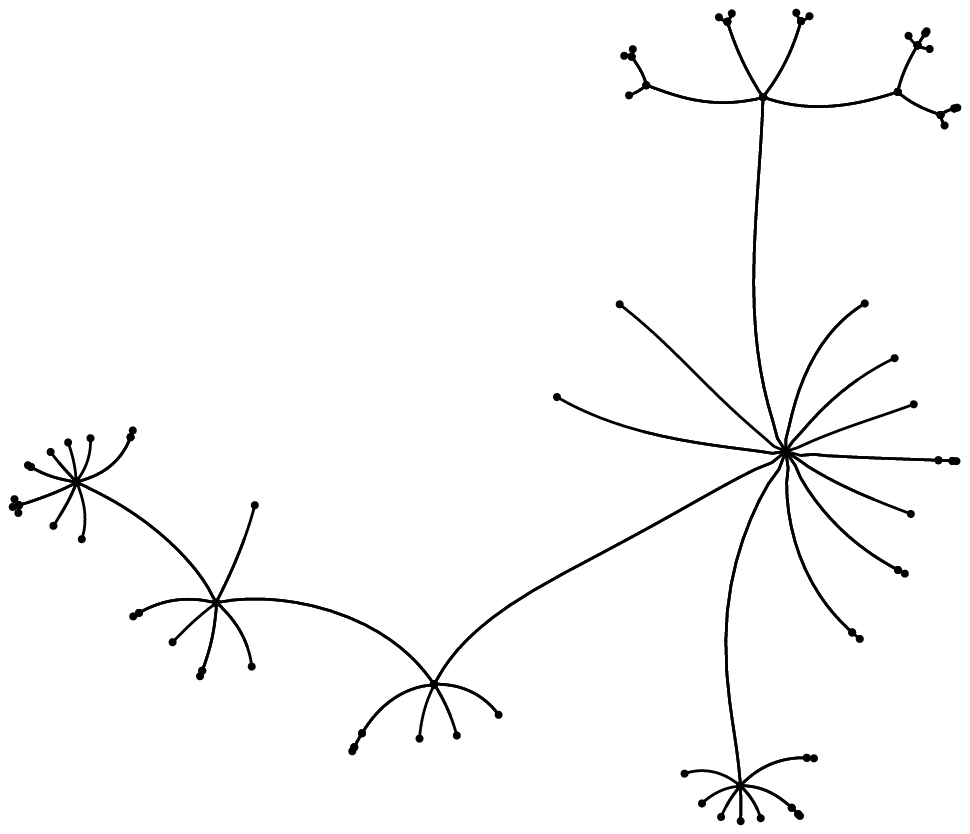}
$\hphantom{xxxxx}$ 
\includegraphics[height=2.25in]{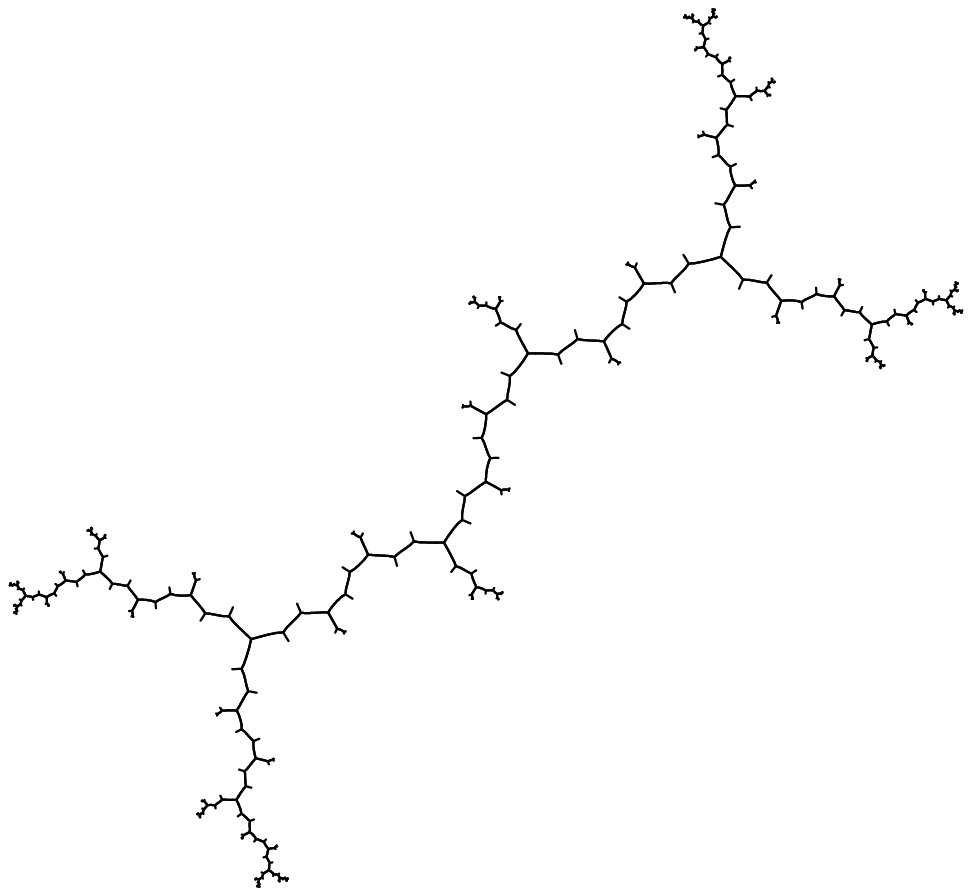}
}
\caption{ \label{Marshall}
Two true trees drawn by Marshall and Rohde's program. The
data on the left was a randomly constructed tree with 
75 edges. 
The   tree edges are approximated by polygons 
so are not highly accurate, 
but the  vertices (i.e., the roots of the 
Shabat polynomials) can be computed to over a 1000 
digits of accuracy.
The tree on the right has 1250 edges  whose 
combinatorics were chosen to match   those 
of the Julia set of a certain quadratic polynomial and 
the resulting true tree accurately matches the shape of 
the Julia set.
}
\end{figure}

The paper \cite{classS} contains a generalization of 
Theorem \ref{poly} from polynomials to entire functions.
Given an infinite tree $T $ in the plane satisfying 
certain bounded geometry conditions, this paper 
gives a construction  of an   entire function with 
only two critical values, so that $f^{-1}([-1,1])$ approximates
$T$ in a precise sense.
 Section 15 of \cite{classS} 
describes how Theorem \ref{poly} in this paper can 
be deduced from  the more
intricate construction in that paper.
Other applications are also given, e.g.,  the construction of 
Belyi functions on certain non-compact surfaces and 
the existence of 
an entire function with bounded singular set that has 
a wandering Fatou component (this is impossible for 
entire functions with finite singular sets by a modification of 
Dennis Sullivan's  ``non-wandering'' argument
for rational functions. See  
\cite{MR1196102},
\cite{MR857196},
\cite{MR819553}).

If we require the harmonic measures for the two sides of a tree
edge to be identical, but don't require all edges to have the
same harmonic measure, we get what is called a minimal
continuum. These sets arise as the continua of minimal 
capacity that connect a given finite set.
Minimal continua 
are studied by Herbert Stahl in \cite{Stahl};
 this authoritative paper 
contains extensive history and references for the topic.
 I thank Alex Eremenko for pointing out the connection 
between balanced trees and minimal continua  to me.

\section{Basic properties of conformally balanced trees}

 In this paper,  a finite  plane tree  $T$ will be a connected compact set
in $\complex$ that does 
not separate the plane and is a union of a finite collection of closed 
Jordan arcs, any two of which are either disjoint or have exactly one 
endpoint in common.  The edges of the tree are the interiors of these
arcs and  the vertices are the endpoints.
We shall say that two finite trees in the plane are  
equivalent if there is homeomorphism of the plane that takes 
one to the other. Note that this is more restrictive than 
saying there is a homeomorphism from one tree to the other
since such a map can swap branches in a way that a planar 
homeomorphism cannot. See Figure \ref{TwoTrees}.

\begin{figure}[htb]
\centerline{
	\includegraphics[height=1in]{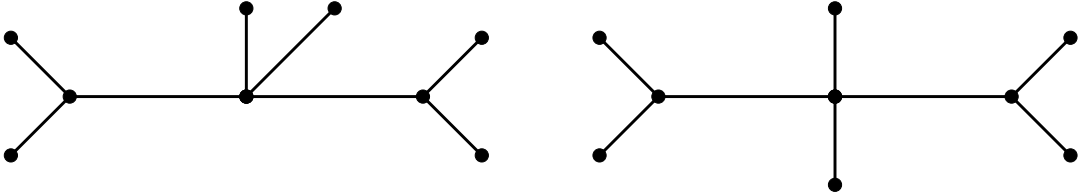}
}
\caption{ \label{TwoTrees}
Two planar trees that are homeomorphic but not equivalent 
(no homeomorphism of the plane maps one to the other).
}
\end{figure}

A planar tree is locally connected, so a conformal map from 
$\disk^* $  to $\Omega = \complex \setminus T$, 
extends continuously to $\circle $.  
We shall always  assume that such a map fixes  $\infty$.

Let $R_n \subset \circle$ be the set of $n$th roots of unity.
A finite tree with $n$ edges  is   conformally balanced   if there is a 
conformal map $ f: \disk^* \to  \Omega = 
\complex \setminus T $ so that each component of 
$\circle \setminus R_{2n}$ is mapped  1-1 onto an edge of  the tree 
and if $I,J$ are two distinct components that map to the same edge, 
then $f^{-1} \circ f$ defines  a length preserving, orientation reversing
 map from one component to 
the other. This expresses precisely the idea that every edge has the 
same harmonic measure and that  harmonic measure on 
each edge is the sum of  harmonic measures corresponding to each side 
separately, and that these two measures are identical.

An orientation preserving 
 homeomorphism $\phi$ of the plane to itself is called 
quasiconformal (or QC for short) if it is absolutely continuous on 
almost all vertical and horizontal lines and satisfies 
$|f_{\overline{z}}|  \leq k |f_z|$ almost  everywhere  for 
some $k < 1$. Such a map is also called $K$-quasiconformal 
where $K = (k+1)/(k-1)$ measures the eccentricity of 
image ellipses  of infinitesimal circles under $f$.
The smallest such $K$ is called the quasiconstant of $f$.
The collection of $K$-quasiconformal maps for a fixed
$K$ form a compact family  with respect to uniform convergence 
on compact sets (assuming the maps are normalized
to fix $\infty$ and two finite points). 
 See Alhfors' book \cite{Ahlfors-QCbook} for 
 this and other  properties of  such  maps. 
The function $ \mu = f_{\overline z}/f_z$ is called 
the dilatation of the map $f$ and the size of $|\mu|$ 
measures how far $f$ is from conformal; if $\mu=0$ 
on an open set, then $f$ is conformal on that set.
There is a composition law for dilatations that implies 
that if  $f$ and $g$ have the 
same dilatation on an open set, then $f^{-1} \circ g$ is conformal
on that set.  
If  $f$ has zero dilatation on the whole plane, then $f$ is a conformal 
linear map, i.e., $f(z) = az+b$. 
A well known quantitative version of this fact is:

\begin{lemma} \label{QC 2} 
Given $K < \infty$ and $\epsilon > 0$ there is a $\delta>0$ so the 
following holds. If $\psi$ is a $K$-quasiconformal map of the plane  
fixing $0,1,\infty$  and if its dilatation is zero except on a 
measure $\delta$  subset of $\disk$, 
then $|f(z) - z| < \epsilon$ for every $z \in \disk$.   
\end{lemma}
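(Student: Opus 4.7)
The plan is to prove Lemma \ref{QC 2} by a normal-families / compactness argument: if the conclusion fails, a limit of a bad sequence will contradict the fact that the only conformal self-map of the sphere fixing $0,1,\infty$ is the identity.

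Suppose for contradiction that the lemma fails. Then there exist $K<\infty$ and $\epsilon>0$ and a sequence of $K$-quasiconformal maps $\psi_n$ of the plane, each fixing $0,1,\infty$, whose dilatation $\mu_n$ is zero off a set $E_n \subset \disk$ with $|E_n| \le 1/n$, but for which $|\psi_n(z_n) - z_n| \ge \epsilon$ for some $z_n \in \disk$. Since $\disk$ is precompact, after passing to a subsequence we may assume $z_n \to z_0 \in \overline{\disk}$.

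By the compactness of the family of $K$-quasiconformal self-maps of the sphere fixing three points (the result quoted in the paragraph preceding the lemma, from Ahlfors' book), after extracting a further subsequence we may assume $\psi_n \to \psi$ uniformly on compact subsets of $\complex$, where $\psi$ is again a $K$-quasiconformal self-map of the plane fixing $0,1,\infty$. The key step is now to show that $\psi$ has dilatation zero almost everywhere. Since $\|\mu_n\|_\infty \le k := (K-1)/(K+1) < 1$ and $\mu_n$ is supported on $E_n \subset \disk$ with $|E_n| \to 0$, we have $\mu_n \to 0$ in $L^p(\complex)$ for every $p<\infty$, and in particular weak-$*$ in $L^\infty$. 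The standard stability theorem for the measurable Riemann mapping theorem (Ahlfors, \emph{Lectures on Quasiconformal Mappings}) asserts that if $\psi_n$ are the normalized solutions of the Beltrami equation with dilatations $\mu_n$ and $\mu_n \to \mu$ weak-$*$ in $L^\infty$ with $\|\mu_n\|_\infty$ uniformly bounded away from $1$, then $\psi_n$ converges locally uniformly to the normalized solution with dilatation $\mu$. Applied here with $\mu = 0$, this identifies the limit $\psi$ as the normalized solution of the trivial Beltrami equation, i.e.\ a conformal self-map of the plane fixing $0,1,\infty$, which forces $\psi(z) = z$.

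Then uniform convergence on the compact set $\overline{\disk}$ gives $\psi_n(z_n) \to \psi(z_0) = z_0$, contradicting $|\psi_n(z_n) - z_n| \ge \epsilon$. The main obstacle is the cited stability result for the Beltrami equation; everything else is a standard normal-families packaging. One could avoid invoking weak-$*$ stability by noting that since $\mu_n \to 0$ in $L^2$, $\psi_n - \mathrm{id}$ converges to $0$ in a suitable Sobolev seminorm via the Beurling–Ahlfors transform representation $\partial(\psi_n - \mathrm{id}) = S(\mu_n \cdot \overline\partial \psi_n)$, but the compactness route above is cleaner and suffices.
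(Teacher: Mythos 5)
Your argument is the same compactness argument the paper gives (the paper states it in two sentences without elaboration): normal families for $K$-quasiconformal maps normalized at three points, identify the limit as conformal, conclude it is the identity, derive a contradiction. You fill in the step the paper leaves implicit, namely why the limit map has zero dilatation, and that is the right thing to address.

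One technical point to correct: the stability statement you attribute to Ahlfors is not true under weak-$*$ convergence of the dilatations. If $\mu_n \rightharpoonup \mu$ weak-$*$ in $L^\infty$ with $\|\mu_n\|_\infty \le k < 1$, the normalized solutions $\psi^{\mu_n}$ need \emph{not} converge locally uniformly to $\psi^{\mu}$; rapidly oscillating Beltrami coefficients converging weak-$*$ to $0$ can produce an effective non-trivial limit (this is the homogenization / $G$-convergence phenomenon, and it is exactly why the Beurling transform does not pass nicely through weak limits). The correct hypothesis in the stability theorem is convergence of $\mu_n$ in measure, or almost everywhere, or in $L^1_{\mathrm{loc}}$. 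In your situation this is no problem: since $\|\mu_n\|_\infty \le k$ and $\mu_n$ is supported on $E_n \subset \disk$ with $|E_n| \to 0$, you have $\mu_n \to 0$ in $L^1(\complex)$, hence in measure, and the standard stability theorem then gives $\psi_n \to \mathrm{id}$ locally uniformly. So the conclusion stands; just cite the measure/a.e. version of the theorem rather than the weak-$*$ version. (Alternatively, one can argue directly that $\|\bar\partial \psi_n\|_{L^1(\disk)} \le \|\mu_n\|_{L^2(\disk)} \|\partial\psi_n\|_{L^2(\disk)} \to 0$ using the uniform $W^{1,2}_{\mathrm{loc}}$ bound on $K$-QC maps, so the weak $L^2_{\mathrm{loc}}$ limit $\bar\partial\psi$ vanishes in $\disk$; outside $\disk$ every $\psi_n$ is already conformal, and the limit is conformal everywhere.)
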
 

\begin{proof}
One can give more precise estimates, but this version is simply a 
compactness argument. If  $\delta \searrow 0$, then  the maps must 
converge on compact sets to a conformal map fixing $0,1,\infty$, 
i.e., the identity.
\end{proof} 

The measurable Riemann mapping theorem says that 
given any measurable $\mu$ on the plane with $\|\mu\|_\infty 
<1$, there is a quasiconformal map $f$ with dilatation 
$\mu$. This is the key result about quasiconformal maps that 
we need, as illustrated by the following definition and 
lemma.

A tree $T$  is QC-balanced if there is a quasiconformal 
mapping  $\phi: \disk^* \to \Omega$  so that  components 
of $\circle \setminus R_{2n}$ are mapped  to edges of $T$
 and  when two components 
are mapped  to the same edge,  $\phi^{-1} \circ \phi$ is length preserving 
and orientation reversing between the components (this is the same 
the definition of 
 conformally balanced, except that we have replaced the conformal 
map by a quasiconformal map). 

\begin{lemma}  \label{QC 1}
Suppose $T$ is a QC-balanced tree. Then there is a quasiconformal map 
of the plane to itself sending $T$ to a conformally balanced tree.
\end{lemma}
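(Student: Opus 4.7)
The plan is to use the measurable Riemann mapping theorem (MRMT) to ``straighten'' the given quasiconformal map $\phi$: I would construct a quasiconformal homeomorphism $\Psi : \complex \to \complex$ so that $f := \Psi \circ \phi$ is conformal on $\disk^*$, and then take $\Psi(T)$ as the desired conformally balanced tree. The key observation is that $\Psi$ is a plane homeomorphism, so post-composition by $\Psi$ preserves all the combinatorial and metric boundary data of $\phi$ on $\circle$; the real work is just to build $\Psi$.

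To build $\Psi$ I would prescribe its dilatation and apply the MRMT. The composition formula for Beltrami coefficients,
\[ \mu_{\Psi \circ \phi}(z) \;=\; \frac{\mu_\phi(z) + \mu_\Psi(\phi(z))\,\overline{\phi_z}/\phi_z}{1 + \overline{\mu_\phi(z)}\,\mu_\Psi(\phi(z))\,\overline{\phi_z}/\phi_z}, \]
vanishes precisely when $\mu_\Psi(\phi(z)) = -\mu_\phi(z) \cdot \phi_z/\overline{\phi_z}$. Accordingly, I would define a Beltrami coefficient $\nu$ on $\complex$ by
\[ \nu(w) \;=\; -\mu_\phi(\phi^{-1}(w)) \cdot \frac{\phi_z(\phi^{-1}(w))}{\overline{\phi_z(\phi^{-1}(w))}} \qquad (w \in \Omega), \]
extended by $\nu \equiv 0$ on $T$ (which has planar measure zero, so the value there is immaterial). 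Since $\phi$ is $K$-quasiconformal, $\|\nu\|_\infty \le k < 1$, so the MRMT produces a quasiconformal $\Psi : \complex \to \complex$ with dilatation $\nu$, normalized to fix $\infty$ and two other finite points. By construction $f = \Psi \circ \phi$ has zero dilatation on $\disk^*$ and is therefore conformal there; it maps $\disk^*$ onto $\complex \setminus \Psi(T)$ and fixes $\infty$.

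It remains to check that $\Psi(T)$ is conformally balanced relative to $f$. The edges of $\Psi(T)$ are the $\Psi$-images of the edges of $T$, and since $\Psi$ is injective, for $z,w \in \circle$ we have $f(z) = f(w)$ if and only if $\phi(z) = \phi(w)$. Hence $f$ sends each component of $\circle \setminus R_{2n}$ bijectively onto an edge of $\Psi(T)$, and whenever two such components meet the same edge, the induced gluing map $f^{-1} \circ f$ on $\circle$ coincides with $\phi^{-1} \circ \phi$, which by hypothesis is length-preserving and orientation-reversing. The only delicate step is getting the phase factor right when defining $\nu$; once that is done, the MRMT step is immediate and the verification of conformal balancing is essentially formal, since the boundary identifications on $\circle$ are determined purely by $\phi|_\circle$ and are untouched by post-composition with the homeomorphism $\Psi$.
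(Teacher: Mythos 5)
Your proof is correct and follows essentially the same route as the paper: the paper's proof simply takes $\mu$ to be the dilatation of $\phi^{-1}$ on $\Omega$ and applies the measurable Riemann mapping theorem, while you derive the explicit Beltrami coefficient $\nu$ from the composition formula — but these are the same object, since the condition $\mu_\Psi(\phi(z)) = -\mu_\phi(z)\,\phi_z/\overline{\phi_z}$ is precisely the formula for $\mu_{\phi^{-1}}$ at $\phi(z)$. Your extra paragraph verifying that the boundary identifications on $\circle$ survive post-composition by the homeomorphism $\Psi$ makes explicit a step the paper leaves implicit.
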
 

\begin{proof}
Let $\phi: \disk^* \to \Omega$ be the QC map in the definition of QC-balanced
and let $\mu$ be the dilatation of $\phi^{-1}$ on $\Omega$. By the 
measurable Riemann mapping theorem there is a quasiconformal $\psi$ on the 
plane with the same dilatation and thus $ \psi \circ \phi$ is conformal. 
Hence $\psi(T)$ is conformally balanced.
\end{proof} 

To say this in a slightly different way, if we compose the 
QC map $\phi^{-1}: \Omega \to \disk$ with $z^d$ and the 
Joukowsky map we get a locally QC map $g$  from $\Omega$ to 
$U = \complex \setminus [-1,1] $ that extends continuously
to the whole plane. Then $g$ is a $d$-to-$1$ quasiregular map 
with singular  values $\pm 1$ 
and the measurable Riemann mapping theorem implies there is a 
quasiconformal map $\phi$ so that $f = g \circ \phi^{-1}$ is 
a $d$-to-$1$ holomorphic map  with the same singular values as
$g$, i.e., $f$ is a Shabat polynomial. 

Thus we can construct  a conformally balanced tree by first 
constructing a QC-balanced tree and ``fixing it'' with a QC map.

\begin{lemma}
Every finite tree in the plane can be mapped to a conformally 
balanced tree by a homeomorphism of the plane. 
\end{lemma}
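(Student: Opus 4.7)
The plan is to reduce to Lemma~\ref{QC 1} by showing every finite planar tree $T$ is QC-balanced. As a plane homeomorphism is permitted in the final statement, we may first deform $T$ so that its edges are, say, piecewise linear Jordan arcs; this gives arclength parametrizations of the edges and makes no difference to the conclusion.

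Let $f\colon \disk^* \to \Omega = \complex \setminus T$ be the Riemann map, $f(\infty)=\infty$. Its continuous boundary extension partitions $\circle$ into $2n$ arcs $a_1,\dots,a_{2n}$ meeting at the vertex preimages, each mapped bijectively onto an edge-side of $T$, with an involution $\sigma$ pairing the two arcs above the same edge. Let $b_1,\dots,b_{2n}$ be the arcs of $\circle\setminus R_{2n}$. For each edge $e$, fix the rescaled arclength parametrization $\gamma_e\colon [0,\pi/n] \to e$ with orientations chosen to match the cyclic traversal of $\partial\Omega$ on the first-visit side. Define a boundary homeomorphism $H\colon \circle\to\circle$ by $H(b_i(s)) = (f|_{a_i})^{-1}(\gamma_{e_i}(s))$ on the first-visit arc, and $H(b_{\sigma(i)}(t)) = (f|_{a_{\sigma(i)}})^{-1}(\gamma_{e_i}(\pi/n - t))$ on the paired arc. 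With this setup, $\phi := f\circ H$ sends $b_i$ and $b_{\sigma(i)}$ onto the same edge via length-preserving orientation-reversing identification of arcs, which is precisely the definition of QC-balanced, provided $\phi$ is shown to be quasiconformal.

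The main step is to verify that $H$ is quasisymmetric; then the Beurling--Ahlfors extension gives a QC self-map of $\disk^*$ extending $H$, so $\phi=f\circ H$ is QC and Lemma~\ref{QC 1} completes the proof. On each $b_i$, $H$ is the composition of a bi-Lipschitz arclength map on the edge with the real-analytic, nondegenerate $f^{-1}$ away from corners, hence locally bi-Lipschitz. At a point $r\in R_{2n}$, the two adjacent arcs $b_j,b_{j+1}$ map under $H$ to arcs $a_j,a_{j+1}$ sharing a common endpoint $q=H(r)$; this $q$ is a single preimage of some vertex $v$ of $T$ and corresponds to one specific sector $S$ of $\Omega$ at $v$ of opening angle $\alpha$. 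Because both $a_j$ and $a_{j+1}$ bound this same sector near $q$, the conformal map $f$ has the identical local form $f(z)-v \approx c_{\pm}(z-q)^{\alpha/\pi}$ on the two arcs. Composing with the bi-Lipschitz edge parametrizations yields $|H(r\pm t) - q| \approx C_{\pm}\, t^{\pi/\alpha}$ with matching two-sided exponent $\pi/\alpha$, which gives the three-point quasisymmetry condition at $r$.

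The principal obstacle is exactly this two-sided exponent matching at points of $R_{2n}$: if the two sides of $r$ corresponded to preimages of different sectors of $\Omega$, the one-sided exponents $\pi/\alpha_{\pm}$ would generically disagree and $H$ would fail to be quasisymmetric. What rescues the construction is the elementary observation that a single point of $R_{2n}$ maps under $H$ to a single preimage of a vertex, which is tied to one sector of $\Omega$ and hence one conformal exponent, so that the symmetry becomes automatic once the edges are parametrized by arclength.
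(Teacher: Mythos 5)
Your proof is correct, and it takes a genuinely different route than the paper. The paper first deforms $T$ so that the edges meet at equal angles at every vertex; this is what makes the harmonic measures on the two \emph{sides} of each edge decay at the same rate at the endpoints, so that $f^{-1}\circ f\colon I\to J$ is bi-Lipschitz. It then builds the QC-balancing map as $\Phi=f\circ g^{-1}\circ h^{-1}$, where $g=L\circ f^{-1}\circ f$ is a bi-Lipschitz circle map and $h$ is a second bi-Lipschitz circle map sending the vertex preimages $E$ to $R_{2n}$, and finally QC-extends each piece. You instead write the full boundary correspondence $H$ in closed form via arclength on the edges and $(f|_{a_i})^{-1}$, and you check quasisymmetry directly. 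The crucial point in your argument --- that the two arcs $a_j, a_{j+1}$ adjacent to a given vertex preimage $q$ border the \emph{same} sector of $\Omega$ and therefore have the same conformal exponent $\pi/\alpha$ at $q$ --- is exactly the observation that lets you dispense with the equal-angle normalization, since you are comparing the two sides of a single preimage point rather than the two preimages $I, J$ of a single edge (which lie in different sectors and would indeed have mismatched exponents in general). What the paper's approach buys is that everything is bi-Lipschitz, so the QC extension is elementary; what your approach buys is a single canonical boundary map and no need for the equal-angle deformation. One small technical remark: Beurling--Ahlfors as usually stated extends a quasisymmetric homeomorphism of $\reals$ to a QC map of the upper half-plane; on $\circle$ one either conjugates by a M\"obius transformation or invokes the Douady--Earle extension, but this is routine and does not affect the argument.
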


\begin{proof}
By the previous lemma, it suffices to map $T$ to a 
QC-balanced tree.
Every planar tree is  equivalent to one with 
 straight segments for edges and such a tree is clearly   equivalent
to one with smooth edges meeting with equal angles 
at each vertex (i.e., at a degree three vertex the edges meet 
at angle $120^\circ$).  For such a tree the harmonic measures 
for two sides of any edge  decay at the same rate at each 
endpoints (the decay rate may be different at the two endpoints 
of an edge if the endpoints have different degrees)
and this means the harmonic measures
for the two sides of an edge are within a bounded factor of each 
other (depending on the tree and the edge).

Let $E$ be the preimages of the vertices under $f$. If $T$ has 
$n$ edges, there are $2n$ points in $E$. The $2n$ 
components of $\circle \setminus E$  are paired by the relation 
of mapping to the same edge of $T$. Suppose $I,J$ is such a pair.
Then $f^{-1} \circ f: I \to J$ defines 
a biLipschitz map  between such a pair of  corresponding  arcs $I, J$. 
In what follows, $f^{-1} \circ f$ will always refer this this 
type of map (between different intervals), rather than the 
identity from an interval to itself.

Let $L:J \to I$ be the map that multiplies length by a factor 
of $  |I|/|J|$ and reverses orientation. Define $L$ on $I$ to 
be the inverse of this map.  Then $g= L \circ f^{-1} \circ f$ 
maps $I$ to $I$, preserves orientation and is biLipschitz. 
Define $g: J\to J$ to be the identity.  Then define $g$  and $L$ on every other 
pair of edge-arcs  in the same way. The result is a biLipschitz, 
orientation preserving map of the circle to itself so that 
$f(g(x)) = f(L(x))$ for every $x \in \circle$.
Note that $g$ can be extended to a quasiconformal self-map $\phi$
 of $\disk^*$. 
Let $F= g(E)$. Then 
 there is a quasiconformal self-map $h$  of $\disk^*$ that 
maps $E$ to $R_{2n}$ (roots of unity)  and  $|h'|$ is constant on each 
complementary arc.  

Consider the map $ \Phi = f \circ g^{-1}   \circ h^{-1}$. 
It is quasiconformal on $\disk^*$, maps $\circle$ onto $T$, and sends $R_{2n}$ 
to the vertices. If two arcs of $\circle \setminus R_{2n}$ are mapped to 
the same edge, then  $\Phi^{-1}\circ \Phi $ is length preserving.
Hence $T$ is QC-balanced and thus has a QC image that is 
conformally balanced. 
\end{proof}

\begin{lemma} \label{removable}
A conformally balanced tree with $n$ edges
 is of the form $T = p^{-1}([-1,1])$ 
for some polynomial $p$ that has exactly two critical values 
at $ \{ -1,1\}$. The vertices of degree $>1$ of $T$ are exactly the critical 
points of $p$ and the degree equals the order of the zero of $p'$ plus $1$.
  The edges of $T$ are analytic curves. 
\end{lemma}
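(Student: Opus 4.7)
The plan is to realize $p$ explicitly and follow the composition sketched in the Introduction.

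First, let $f:\disk^*\to\Omega=\complex\setminus T$ be the conformal map fixing $\infty$, extended continuously to $\circle$ by local connectivity of $T$. Set $h(z)=g(z^n)=\tfrac12(z^n+z^{-n})$, where $g(w)=\tfrac12(w+1/w)$ is the Joukowsky map, and define $p(z):=h(f^{-1}(z))$ for $z\in\Omega$. Since $f^{-1}$ is conformal, $z\mapsto z^n$ is an $n$-to-$1$ self-cover of $\disk^*$, and $g$ is a conformal map of $\disk^*$ onto $\complex\setminus[-1,1]$, the composition $p$ is an unbranched $n$-to-$1$ holomorphic map from $\Omega$ onto $\complex\setminus[-1,1]$.

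The main step is to show $p$ extends continuously to $\complex$ with $p(T)\subseteq[-1,1]$; this is the heart of the matter and is where the conformally balanced hypothesis enters. Fix an interior point of an edge $e\subset T$ and let $I,J\subseteq\circle\setminus R_{2n}$ be the two arcs that $f$ maps onto $e$. The two side-limits of $p$ at the chosen point are $h(w_I)$ and $h(w_J)$, where $w_I\in I$ and $w_J\in J$ are the corresponding boundary points, identified by the balanced condition via a length-preserving, orientation-reversing map. Writing $h(e^{i\theta})=\cos(n\theta)$, one checks that $h(w_I)=h(w_J)$ for all such pairs exactly when the endpoints of $I$ and $J$ on $\circle$ differ by an even multiple of $\pi/n$. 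This is the standard combinatorial fact, noted in the Introduction, that the two visits to any edge in the boundary walk of a planar tree occur at positions of opposite parity (equivalently, neighboring arcs are sent by $z\mapsto z^n$ to opposite half-circles, and the balanced pairing then matches the Joukowsky pairing). At a vertex $v$, the preimages of $v$ under $f$ lie in $R_{2n}$ and $h(R_{2n})\subseteq\{\pm1\}$, so $p$ extends continuously there with value $\pm1$.

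Once $p$ is continuous on $\complex$ and holomorphic off $T$, a standard Morera/removability argument (triangles can be perturbed off the one-dimensional set $T$) makes the extension entire. The asymptotic $f^{-1}(z)\sim z/a$ at $\infty$ gives $p(z)\sim (z/a)^n/2$, so $p$ is a polynomial of degree $n$. By construction $p(\Omega)\subseteq\complex\setminus[-1,1]$ and $p(T)\subseteq[-1,1]$, hence $T=p^{-1}([-1,1])$; and $p|_\Omega$ being a composition of local biholomorphisms forces every critical point to lie in $T$. At a vertex $v$ of degree $d$, the $d$ incident edges are the $d$ local branches of $p^{-1}([-1,1])$ at $v$, so $p$ has local degree $d$ and $p'$ has a zero of order $d-1$ there; leaves contribute nothing. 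Summing, $\sum_v (d(v)-1)=2n-(n+1)=n-1$, the correct critical multiplicity for a degree-$n$ polynomial. Critical values lie in $p(\text{vertices})\subseteq h(R_{2n})\subseteq\{-1,1\}$. Since no critical value lies in $(-1,1)$, each open edge is a component of $p^{-1}((-1,1))$ on which $p$ is a local biholomorphism, hence is an analytic arc.

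The hard part is the continuity step at interior points of edges, which rests on the parity match between the balanced pairing of arcs on $\circle$ and the Joukowsky identification on $[-1,1]$. After that is secured, the remainder is routine complex analysis: removability across the one-dimensional set $T$, polynomial growth at infinity, and a local-degree count at the vertices.
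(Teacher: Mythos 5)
Your continuity argument across edges is carried out carefully and is essentially correct: the parity of the two visits to an edge in the boundary walk of a planar tree, together with the fact that $z\mapsto z^n$ alternately sends arcs of $\circle\setminus R_{2n}$ to upper and lower half-circles, is exactly what makes the Joukowsky identification agree with the balanced pairing. The paper leaves this to the Introduction, so spelling it out is a genuine contribution of your write-up.

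The gap is in the removability step. You write that once $p$ is continuous on $\complex$ and holomorphic off $T$, ``a standard Morera/removability argument (triangles can be perturbed off the one-dimensional set $T$) makes the extension entire.'' This is not justified at this stage: by the paper's definition, a finite plane tree is merely a union of Jordan arcs, and a priori the edges of a conformally balanced tree could be non-rectifiable, of infinite $\Haus^1$-measure, or even of positive area. Morera over a curve crossing $T$ requires you to integrate along $T$ and cancel the two-sided contributions, which needs $T$ locally rectifiable; ``perturbing triangles off $T$'' is not available when $T$ has positive area, and continuity alone is not enough (a smooth arc already fails to be removable for the continuous analytic class, and wilder Jordan arcs are worse). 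The paper handles this with real content: it observes that a conformally balanced tree is a planar quasiconformal image of a polygonal tree with uniform angles, so $\Omega=\complex\setminus T$ is a John domain, and then invokes the theorem that John domains are removable for $W^{1,2}$ mappings (Jones--Smirnov). That is the nontrivial step your proof replaces with a hand-wave. One way to repair your argument without the John-domain machinery is to first establish analyticity of edges directly from the conformal balance condition via Schwarz reflection: the length-preserving, orientation-reversing identification $\sigma$ between the two circle arcs $I,J$ extends to an anti-conformal reflection of an annular neighborhood of $\circle$, and the identity $f\circ\sigma=f$ on $I$ lets one extend $f$ holomorphically across $I$, showing each edge is an analytic arc; with $T$ then rectifiable (indeed piecewise analytic), your Morera step becomes legitimate. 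As written, though, the argument skips the point where the hypothesis that $T$ is conformally balanced, rather than arbitrary, is actually needed for entirety.
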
 

\begin{proof}
The proof is essentially given in the introduction. The only step that 
was not justified there was the statement  that $g(f(z)^d))$  ``extends continuously 
to the whole plane and hence is entire and hence a polynomial''.
This requires some proof.

We have already seen that a conformally balanced tree  $T$ is the 
planar  quasiconformal 
image of a  finite tree with smooth edges such that  
all angles at vertices are non-zero.
This means the complement of $T$  is a John 
domain and hence is removable for  $W^{1,2}$ mappings (one derivative 
in $L^2$; a QC map raised to a power is in this class locally). 
 See \cite{MR1315551}, 
\cite{MR1785402}. Thus if $ g(f(z)^d)$ is a continuous function that is holomorphic 
off $T$, then it is entire. This finishes the proof sketched in the 
introduction.
\end{proof} 

\begin{lemma} 
Two equivalent conformally balanced trees are the same 
up to a conformal linear map. 
\end{lemma}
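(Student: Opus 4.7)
My plan is to build a map $\phi: \complex \to \complex$ that is conformal on $\Omega_1 := \complex \setminus T_1$, carries $T_1$ bijectively onto $T_2$, and fixes $\infty$; then, using the removability of $T_1$ (as in Lemma~\ref{removable}), to conclude $\phi$ is entire. Being an entire bijection of $\complex$ fixing $\infty$, $\phi$ must be of the form $z \mapsto az + b$.

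Let $f_i : \disk^* \to \Omega_i := \complex \setminus T_i$ be the conformal maps from the definition of conformally balanced, each fixing $\infty$. Since $T_i$ is locally connected, $f_i$ extends continuously to $\overline{\disk^*}$, sends $R_{2n}$ onto the vertex set of $T_i$, and carries each component of $\circle \setminus R_{2n}$ homeomorphically onto an edge. The equivalence between $T_1$ and $T_2$ induces an isomorphism of their combinatorial structures respecting the cyclic order of edges at each vertex, so by postcomposing $f_2$ with a suitable rotation $z \mapsto e^{i\pi k/n}z$ (which permutes $R_{2n}$) I can arrange that for every component $I$ of $\circle \setminus R_{2n}$ the edges $f_1(I) \subset T_1$ and $f_2(I) \subset T_2$ correspond under the equivalence. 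Such a $k$ exists because the boundary values of each $f_i$ on $\circle$ trace out the same cyclic contour sequence of edges in the two trees, so matching one arc to its partner aligns all the others.

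Now set $\phi := f_2 \circ f_1^{-1}$ on $\Omega_1$; this is a conformal bijection onto $\Omega_2$ fixing $\infty$, and I extend it across $T_1$ using continuous boundary values. For $z$ in the interior of an edge $e$ of $T_1$, the two preimages of $z$ under $f_1$ are points $w_1 \in I$ and $w_2 \in J$ on the pair of arcs of $\circle \setminus R_{2n}$ that both map onto $e$. The balanced condition for $T_1$ forces $w_2 = \sigma_{I,J}(w_1)$, where $\sigma_{I,J} : I \to J$ is the unique length-preserving, orientation-reversing bijection (which depends only on $I$ and $J$ as arcs of $\circle$, not on the tree). The same condition applied to $T_2$, with our aligned choice of $f_2$, then gives $f_2(w_1) = f_2(w_2)$, so $\phi(z) := f_2(w_1)$ is unambiguously defined. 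A parallel continuity argument at vertices — all $\deg(v)$ preimages in $R_{2n}$ of a vertex $v \in T_1$ must map under $f_2$ to the common vertex of $T_2$ that corresponds to $v$ — shows $\phi$ extends to a continuous bijection $\complex \to \complex$.

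By the removability argument recalled in Lemma~\ref{removable} (complements of conformally balanced trees are John domains and hence $W^{1,2}$-removable), the extended $\phi$ is entire. Being an entire bijection of $\complex$ fixing $\infty$, $\phi$ is a Möbius transformation fixing $\infty$, i.e., $\phi(z) = az + b$ with $a \ne 0$, so $T_2 = \phi(T_1)$ is the image of $T_1$ under a conformal linear map. The main obstacle is the combinatorial bookkeeping in the second step — verifying that a single rotation of $f_2$ simultaneously aligns every arc-to-edge pairing, and checking well-definedness of $\phi$ at high-degree vertices; once $\phi$ is known to be a well-defined continuous bijection that is conformal off $T_1$, the removability and rigidity conclusions are essentially automatic.
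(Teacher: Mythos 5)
Your proposal is correct and follows essentially the same route as the paper: build $\phi = f_2 \circ f_1^{-1}$ on the complement, use the balanced condition to show it extends continuously across the tree, invoke removability (the paper cites analyticity of the edges, you cite the John-domain/$W^{1,2}$ argument of Lemma~\ref{removable} --- both work), and conclude $\phi$ is linear; you simply spell out the combinatorial alignment and well-definedness steps that the paper leaves implicit. One small slip: you should \emph{pre}compose $f_2$ with a rotation of $\disk^*$ (not postcompose), since you are adjusting the parametrization, not the image.
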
 

\begin{proof} 
If two conformally balanced trees have the same topology, then
there is a conformal map between their complements that extends 
continuously to the whole plane.
Since the edges of balanced tree must be analytic, they are removable 
for conformal maps, so the map is conformal everywhere and hence
is linear.
\end{proof} 

\begin{cor}
Every finite  planar tree is  equivalent 
to a  conformally balanced tree that is unique up to linear maps.  
\end{cor}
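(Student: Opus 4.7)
The plan is to observe that this corollary is essentially a packaging of the two immediately preceding lemmas, so the task reduces to verifying that existence and uniqueness combine cleanly with the notion of equivalence defined earlier in the section.

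For existence, I would start with an arbitrary finite planar tree $T$ and apply the lemma stating that every finite tree in the plane can be mapped to a conformally balanced tree by a homeomorphism of the plane. Call the resulting tree $T'$; since a homeomorphism of the plane takes $T$ to $T'$, by definition $T$ and $T'$ are equivalent. This gives at least one conformally balanced tree in the equivalence class of $T$.

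For uniqueness, suppose $T'$ and $T''$ are both conformally balanced and both equivalent to $T$. Since equivalence of planar trees is transitive (composition of planar homeomorphisms is a planar homeomorphism), $T'$ and $T''$ are equivalent to each other. Then the lemma that two equivalent conformally balanced trees agree up to a conformal linear map applies directly, giving $T'' = aT' + b$ for some $a \in \complex^*$, $b \in \complex$.

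There is no real obstacle here: the corollary is simply the conjunction of the existence statement (previous-to-last lemma) and the uniqueness statement (last lemma), once one notes transitivity of the equivalence relation. The only minor point worth spelling out is that an equivalence between $T$ and a balanced tree $T'$ is provided by a planar homeomorphism rather than merely an abstract isomorphism of trees, so that transitivity takes place inside the correct category; this is immediate from how equivalence is defined at the start of Section~2 and from the construction in the existence lemma, which produced an honest planar homeomorphism (indeed a quasiconformal one).
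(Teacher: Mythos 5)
Your proof is correct and takes essentially the same route as the paper: the corollary is stated without proof precisely because it is the immediate conjunction of the two preceding lemmas (existence via the homeomorphism-to-balanced-tree lemma, uniqueness via the equivalent-balanced-trees-are-linearly-equivalent lemma), together with transitivity of planar equivalence, all of which you spell out cleanly. Your closing remark — that the equivalence produced by the existence lemma is an honest planar homeomorphism (in fact quasiconformal), so transitivity is applied in the right category — is exactly the small point worth flagging.
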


In particular, the number of conformally balanced trees with $n$ 
vertices  (up to linear equivalence) is the same as the number of
plane trees with $n$ vertices.
These  can be counted using  P{\'o}lya's 
enumeration method as in \cite{MR0166776}, \cite{MR0376411}.


\section{The construction on $\circle$}

The proof of Theorem \ref{Main}  consists of constructing a tree  $T$
approximating $K$,  pre-composing  the conformal 
map $f: \disk^* \to \Omega = \complex \setminus T$ by a QC self-map  $\phi$
of $\disk$ and finally post-composing $f$ by a QC map $\psi$ of $\Omega $ onto 
$\Omega' = \complex \setminus T'$ 
  where $T'$ is a QC-balanced  tree containing $T$ 
and is close to it in the Hausdorff metric.
The QC map $\psi \circ f \circ \phi$  associated to $T'$ 
will have uniformly bounded dilation  and the  support
of the dilatation  will have  as small area  
as we wish, so invoking the measurable Riemann mapping theorem 
and Lemma \ref{QC 2} 
gives a conformally balanced tree that approximates $K$.

In this section we construct $T$ and
 the pre-composition map $\phi$ of $\disk^*$.
The tree $T'$ and the QC map $\psi:\Omega \to \Omega'$ will be constructed
in the next section.

Suppose $K$ is a compact connected set.   Choose a large 
integer $D$ and let ${\cal C}$ be the collection of dyadic 
square of size $2^{-D}$ that hit $K$. The corners and edges 
of these squares form a finite graph in the plane and we 
take a spanning tree for this graph. Then add segments of 
length $\frac 14 2^{-D}$ to any vertices of degree $< 4$  
so that every vertex in the resulting tree $T$ has 
degree $1$ or $4$, every edge is still vertical or 
horizontal and every edge has length either $2^{-D}$ or 
$2^{-D-2}$. See Figure \ref{the tree}.
The tree $T$ approximates $K$ to within $2^{-D+1}$  in the 
Hausdorff metric. 

\begin{figure}[htb]
\centerline{
	\includegraphics[height=1.3in]{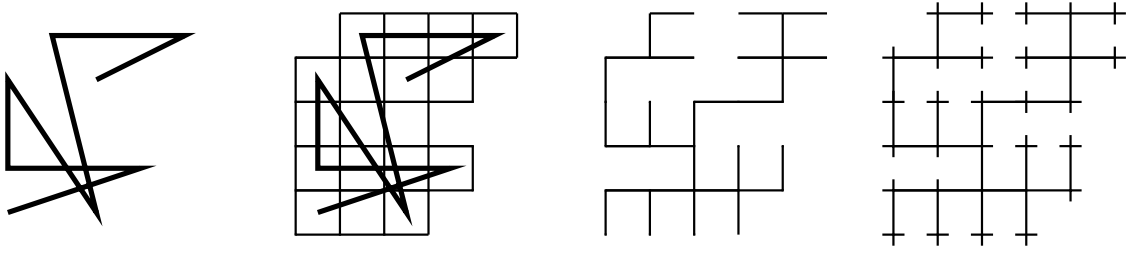} 
}
\caption{ \label{the tree}
A continua is covered by dyadic boxes and an approximated 
tree is formed from the boxes' edges. Some extra segments 
are added to make every degree $1$ or $4$ and every edge
have length $2^{-D}$ or $2^{-D-2}$. 
}
\end{figure}

Why did we add the extra segments to make every degree $1$ 
or $4$? This is more of a convenience than a necessity.
The condition insures that for any edge, the 
 harmonic measures   for the two sides  have the 
same behavior as we approach an endpoint, i.e., 
$\frac {d \omega_1}{d \omega_2}$ is bounded above and 
below on the whole edge (in fact, this function extends to 
be analytic on a neighborhood of the edge).   The precise
version of this fact that we will use is: 

\begin{lemma}
Suppose $e$ is an open edge of $T$, $f:\disk^* \to \Omega$ is a 
conformal map  onto the exterior of $T$ and $I, J \subset \circle$
are the two components of $f^{-1}(e)$. The map $ g=f^{-1} \circ f$ 
defined from $I$ to $J$ has an extension to a conformal map from 
a neighborhood $\Omega_I$  of $I$ to a neighborhood  $\Omega_J$ of $J$.
Moreover, 
$$ \dist(I,  \partial \Omega_I) \geq C_1 |I|,$$
for some absolute $ C_1 >0$. The same estimate holds for $J$ and $\Omega_J$.
Also, 
$$ C_2^{-1} \leq |g'| \frac {|I|}{|J|} \leq C_2$$
on $I$  for some absolute $C_2 < \infty$.
\end{lemma}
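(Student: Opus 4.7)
\emph{Overall approach.} The plan is to construct $g$ in two phases: first extend $f^{-1}\circ f$ across the open arc $I$ by reflecting $f$ (Schwarz reflection) on each of $I$ and $J$ separately, then extend through the endpoints of $I$ (which are preimages of vertices of $T$) via a branch-point cancellation argument. The uniform size and derivative estimates will then be extracted from the bounded geometry of $T$.

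\emph{Schwarz reflection and endpoint extension.} By the construction in this section, every edge $e$ of $T$ is a straight axis-parallel segment. Letting $L$ be the line containing $e$ and $\sigma_L$ the Euclidean reflection across $L$, the formula $\tilde f_I(z)=\sigma_L(f(1/\bar z))$ for $z\in\disk$ is holomorphic in $z$ and matches $f$ on $I$; by the Schwarz reflection principle, $\tilde f_I$ is univalent and analytic on any simply connected domain $U\subset\complex$ with $U\cap\circle\subset I$. Construct $\tilde f_J$ analogously and set $g_0=\tilde f_J^{-1}\circ\tilde f_I$ on a neighborhood of $I$. At an endpoint $z_0$ of $I$ that is the preimage of an interior (degree $4$) vertex $v$, each exterior sector at $v$ has opening $\pi/2$, so $f(\zeta)=v+a(\zeta-z_0)^{1/2}+O(\zeta-z_0)$; the corresponding preimage $\zeta_0$ of $v$ on $J$ carries the same branch type with coefficient $a'$. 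Composing gives $g_0(\zeta)=\zeta_0+(a/a')^2(\zeta-z_0)+O((\zeta-z_0)^2)$, the square-root branches cancel, and $g_0$ extends analytically through $z_0$. At a leaf ($d=1$), $f$ is already locally analytic with a double zero, the arcs $I,J$ share the endpoint $z_0$, and $g_0$ is locally the reflection $\zeta\mapsto 2z_0-\zeta+\ldots$, again analytic. Hence $g$ extends to a conformal map $g:\Omega_I\to\Omega_J$, where $\Omega_I$ is a two-dimensional open neighborhood of the closed arc $\bar I$.

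\emph{Size estimate (main obstacle).} To prove $\dist(I,\partial\Omega_I)\ge C_1|I|$ with an absolute $C_1$, I exploit the bounded geometry of $T$: all edges have length in $\{2^{-D-2},2^{-D}\}$, all interior vertices have degree $4$ with right-angle meetings, and all leaves attach to interior vertices by short edges. This local uniformity makes $\Omega$ a John domain with absolute constants, and a Beurling-type harmonic measure comparison (exploiting the uniform local structure near each vertex) shows that preimage arcs of edges sharing a common vertex have comparable lengths: $|I|/|I'|\in[c,C]$ for absolute $c,C>0$ whenever $I,I'\subset\circle$ share an endpoint. Combined with the Schwarz formula, $\tilde f_I$ is univalent on Euclidean disks of radius $\asymp|I|$ centered at interior points of $I$ (the obstruction being the arc-distance to the endpoints of $I$), and by the local expansion of the previous paragraph, $g$ is analytic on disks of radius $\asymp|I|$ around the endpoints as well. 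Hence $\Omega_I$ contains a tubular neighborhood of $\bar I$ of width $\ge C_1|I|$. The analogous argument, applied with the roles of $I$ and $J$ interchanged, gives the estimate for $\Omega_J$.

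\emph{Derivative estimate.} Once the size bound is in hand, Koebe's distortion theorem applied to the conformal map $g$ on Euclidean disks of radius $C_1|I|/2$ centered at points of $I$ (all contained in $\Omega_I$) shows that $|g'(\zeta)|/|g'(\zeta')|$ is bounded by a universal constant for all $\zeta,\zeta'\in I$. Since $I$ is covered by $O(1)$ such disks and $\int_I|g'|\,|d\zeta|=|J|$, the constant value of $|g'|$ on $I$ is forced to be $\asymp|J|/|I|$, yielding $C_2^{-1}\le|g'||I|/|J|\le C_2$ with an absolute $C_2$. The chief technical difficulty is the Beurling-type step in the size estimate, which relies on the uniform local geometry of the tree $T$ built in this section.
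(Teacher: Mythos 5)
Your extension argument and the paper's are genuinely different. The paper does not reflect $f$ across $\circle$ and then compose; it builds a single map on the preimage $\Omega_I^+=f^{-1}(R)$ of a rectangle $R$ that has $e$ as one full side and width $\tfrac14|e|$. The composition
$(\text{reflection across }\circle)\circ f^{-1}\circ(\text{reflection across the line through }e)\circ f$
is conformal on $\Omega_I^+$ and, crucially, $\partial\Omega_I^+\cap\circle$ is an arc $I'$ that \emph{strictly} contains $\bar I$ (because the two short sides of $R$ lie on adjacent edges of $T$ and pull back to short arcs on $\circle$ beyond the endpoints of $I$). Schwarz reflection across $I'$ then extends $g$ across a neighborhood of the \emph{closed} arc $\bar I$, so no separate treatment of the endpoints is required. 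Your approach instead reflects $f$ on $I$ and $J$ separately and then argues that the $(\zeta-z_0)^{1/2}$ singularities of $\tilde f_I$ and $\tilde f_J$ at the vertex preimages cancel in the composition $\tilde f_J^{-1}\circ\tilde f_I$. That branch-cancellation computation is correct in spirit (and for it to make sense one does need to check, as you implicitly do, that $\tilde f_I$ and $\tilde f_J$ map punctured neighborhoods of $z_0$ and $\zeta_0$ onto the \emph{same} half-disk at $v$, which follows from the degree-$4$ right-angle structure), but it only yields analyticity in \emph{some} punctured neighborhood of the endpoint, with no control on its size. The paper's route buys you the quantitative endpoint extension for free.

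The real gap is in your size estimate. You reduce it to two assertions: (i) ``a Beurling-type harmonic measure comparison shows that preimage arcs of edges sharing a common vertex have comparable lengths,'' and (ii) ``by the local expansion, $g$ is analytic on disks of radius $\asymp|I|$ around the endpoints.'' Neither is justified. Beurling's projection theorem gives one-sided \emph{upper} bounds on harmonic measure; it does not by itself yield two-sided comparability of adjacent arc lengths, and in fact such comparability is of the same nature as the conclusion of this lemma (it is most naturally proved by exactly the kind of reflection-extension the lemma is establishing), so invoking it here is close to circular. And assertion (ii) does not follow from the local series computation: the branch cancellation shows $g$ extends analytically to \emph{a} neighborhood of $z_0$, but says nothing about that neighborhood having radius comparable to $|I|$. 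To get that you must control how far the reflected maps remain univalent near the vertex, which is a geometric statement about $T$ and $f$, not a formal power-series fact. The paper gets the lower bound $\dist(I,\partial\Omega_I)\gtrsim|I|$ from a concrete mechanism: the harmonic measure of $e$ in $\Omega$ from any point on the far side of $R$ is bounded away from $1$ by an absolute constant (since the opposite side is at distance $\tfrac14|e|$ and Brownian motion has a definite chance of escaping past the short sides of $R$), and conformal invariance transports this to $\disk^*$. Your derivative estimate via Koebe distortion and $\int_I|g'|=|J|$ is fine and essentially matches the paper's use of the Koebe theorem, but it is downstream of the size bound, so the gap above propagates.
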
 

\begin{proof}
This is just an application of the Schwarz reflection principle.
 We first 
consider the case when the endpoints of $e$ both have 
degree $4$, as in  Figure \ref{Reflect}.
 Let $e'$ be the edge $e$ with  perpendicular 
segments of length $\frac 14 |e|$ added at either end, 
so as to bound three sides of a rectangle $R$, whose 
preimage under $f$ is an open set $\Omega_I^+$ in $\disk^*$ with 
$I$ in its boundary. This open set, together with its
boundary $I'$  on $\circle$ and its reflection across 
$\circle$ will be the set $\Omega_I$.

Map $\Omega_I^+$ to $R$ by $f$, follow by a reflection 
across $e$ to another rectangle, map this to a set 
$\Omega_J^+$ by $f^{-1}$ and reflect this across 
$\circle $ to the set $\Omega_J^-$. Let $\Omega_J$ 
be the  union of
$\Omega_J^+, \Omega_J^-$ and the interior (in $\circle$) of
 their common boundary $J'$. 

This composition is made up of two conformal maps and 
two reflections, so is a conformal map $ \Omega_I^+ \to \Omega_J^-$
  and  sends  $I'$ to $J'$, 
so by the Schwarz reflection principle,  it  extends to be 
a conformal map from $\Omega_I$ to $\Omega_J$.

Clearly the harmonic measure of $e$ in $\Omega = \complex
\setminus T$ from any point of the opposite side of $R$ 
is bounded uniformly away from one, so the
same is true of $I$ in $\disk^*$ from any point of 
$\partial \Omega_I \cap \disk^*$. This implies $\partial
\Omega_I$ is at least distance $C_1 |I|$ from $I$ for 
some absolute $C_1$. The same applies to $J$ and 
$\Omega_J$.  The Koebe $\frac 14$-theorem now implies 
that $g$ has derivative comparable to $|J|/|I|$ on $I$, 
again with absolute constants. 
\begin{figure}[htb]
\centerline{
\includegraphics[height=1.83in]{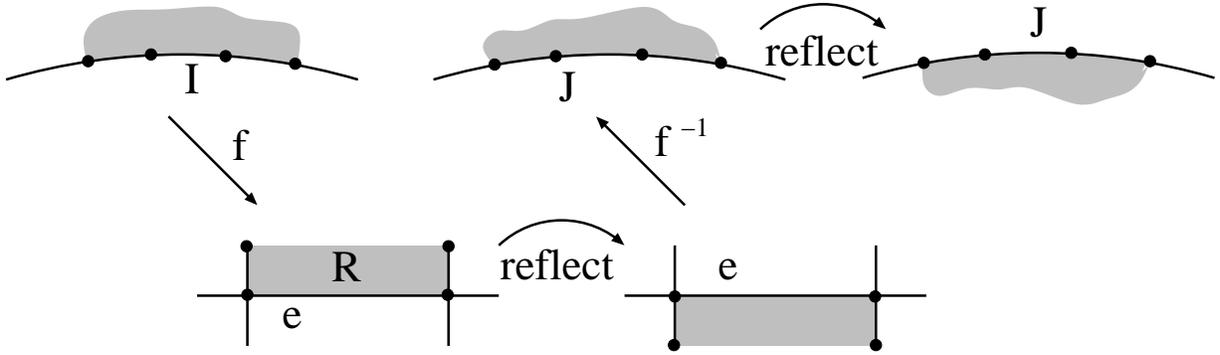} 
}
\caption{ \label{Reflect}
Proof that $ g= f^{-1} \circ f$ has a conformal extension to a 
neighborhood of $I$ if the image connects two vertices of degree $4$. 
}
\end{figure}

If $e$ has one vertex of degree $1$ and the other of 
degree $4$, the argument is very similar. In this case, 
the intervals $I$ and $J$ are adjacent and we take $R$ 
as shown in Figure \ref{Reflect2}. Its preimage under 
$f$ is the light gray region above the circle   that
we will denote $\Omega_{IJ}^+$, and the darker region below 
the circle is its reflection $\Omega_{IJ}^-$. As before, the composition 
of the four maps is conformal  between these domains, and 
hence it has a conformal extension from the obvious 
domain  $\Omega_{IJ}$ to itself.
The remaining conclusions follow just as before. 

\begin{figure}[htb]
\centerline{
\includegraphics[height=2.0in]{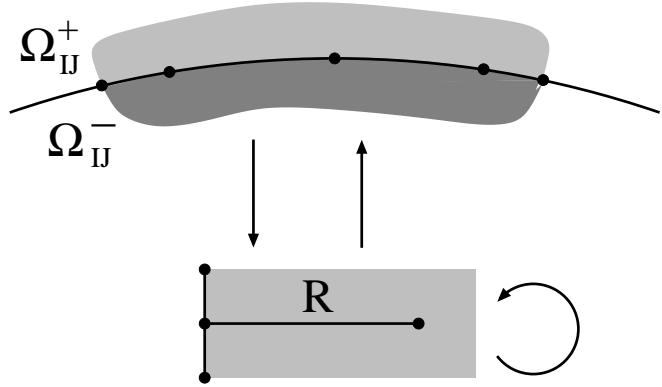} 
}
\caption{ \label{Reflect2}
Proof that $ g= f^{-1} \circ f$ has a conformal extension to a 
neighborhood of $I$ if the image connects   a degree $1$ and  degree $4$
vertex. 
}
\end{figure}  
\end{proof}

So the restriction of the mapping $g = f^{-1} \circ f$ 
to  any component of 
$\circle \setminus E$ has a conformal extension to a uniformly 
larger neighborhood (recall $E$ are the preimages under $f$ 
of the vertices of $T$), although the map itself may have jump 
discontinuities at the points of $E$. This is not quite the 
same as  ``piecewise 
analytic'' since this term usually includes continuity at 
the endpoints.  We want to  approximate  $g$ 
by a piecewise linear map on each component of $\circle \setminus E$ 
by adding more points 
into the gaps between $E$ and linearly interpolating the 
values of $g$ between these points. 

\begin{lemma} \label{intervals}  
Suppose $g$  is as above.
Then there is a 
quasiconformal map $\phi$ of $\disk^*$ to itself, and a finite
set $F \subset \circle $ so that $\phi(F)$  contains  $E$ and 
so that $ \phi^{-1}\circ g \circ \phi$ is piecewise linear on each component 
of $\circle \setminus F$.
 The quasiconstant of $\phi$ 
is  uniformly bounded and the dilatation  $\mu$ of $\phi$  can be  
chosen to be supported in any neighborhood of $\circle$ that we want 
(depending on our choice of $F$).
 We let ${\cal I}$ denote the
connected components of $\circle \setminus F$. 
The length of each interval in ${\cal I}$  may be chosen to
be of the form $ 2 \pi 2^{-n}$ for some  integer $n$ (possibly 
different $n$'s for different intervals), 
the lengths  of  adjacent intervals are
within a factor of $2$ of each other and  every 
interval has the same length as at least  one of 
its two neighbors.
If two intervals in ${ \cal I}$ are adjacent and their common 
endpoint  is mapped by $f$ to a vertex of $T$, then they have the same length.
 \end{lemma}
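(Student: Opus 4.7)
The plan is to prescribe $\phi$ on $\circle$ so that $\phi^{-1} \circ g \circ \phi$ is affinely linear on each component of the target partition, and then extend $\phi$ to $\disk^*$ by a standard collar construction whose dilatation can be confined to any prescribed neighborhood of $\circle$. Since the statement allows arbitrary dyadic lengths of the form $2\pi\cdot 2^{-n}$ subject to the adjacency constraints, I use a single scale $\ell = 2\pi \cdot 2^{-m}$ with $m$ large, which trivially satisfies the adjacency, equality-with-neighbor, and vertex-matching conditions. For each edge $e$ of $T$, denote by $I_e, J_e$ the two arcs of $\circle \setminus E$ mapping to $e$ and by $g_e : I_e \to J_e$ the conformal extension from the previous lemma, so that $|g_e'| \asymp |J_e|/|I_e|$ uniformly on $I_e$. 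The key geometric input is that every vertex of $T$ has degree $1$ or $4$ and every edge has length $2^{-D}$ or $2^{-D-2}$, which makes $|I_e|/|J_e|$ uniformly bounded above and below by absolute constants.

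I would then choose $m$ large enough that $\ell \ll \min_e |I_e|$ and, for each $e$, pick a positive integer $n_e$ so that $n_e\ell$ is within a factor of $2$ of both $|I_e|$ and $|J_e|$ and $\sum_e 2n_e = 2^m$; such integers exist by elementary rounding. Take $F$ to be $2^m$ equispaced points on $\circle$, grouped cyclically into blocks of sizes $n_{e_1}, n_{e_1}, n_{e_2}, n_{e_2}, \ldots$ matching the cyclic order of the arcs of $\circle \setminus E$. On the source side, subdivide each $I_e$ into $n_e$ equal sub-arcs $I_e^{(k)}$ and set $J_e^{(k)} = g_e(I_e^{(k)})$. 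Define $\phi|_\circle$ piecewise: on each target sub-arc $\tilde I_e^{(k)}$ in the $I_e$-block, let $\phi$ be the affine map onto $I_e^{(k)}$; on each $\tilde J_e^{(k)}$ in the $J_e$-block, let $\phi = g_e \circ \phi|_{\tilde I_e^{(k)}} \circ L_{ek}^{-1}$, where $L_{ek} : \tilde J_e^{(k)} \to \tilde I_e^{(k)}$ is the orientation-reversing isometry between two target arcs of equal length. A direct chain computation gives $\phi^{-1} \circ g_e \circ \phi = L_{ek}$ on $\tilde I_e^{(k)}$, which is linear, and by construction the block-boundary points of $F$ map under $\phi$ to the elements of $E$, so $\phi(F) \supset E$.

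The bi-Lipschitz constant of $\phi|_\circle$ is absolute: on $\tilde I_e^{(k)}$ the derivative equals $|I_e|/(n_e \ell)$, and on $\tilde J_e^{(k)}$ it equals $|g_e'| \cdot |I_e|/(n_e\ell) \asymp |J_e|/(n_e\ell)$, both bounded by the choice of $n_e$. The extension to $\disk^*$ can then be obtained by a standard collar construction: in a thin annulus $\{1 < |z| < 1 + \delta\}$ with $\delta$ arbitrarily small, use an explicit QC interpolation between the prescribed boundary map and the identity (for example the Beurling--Ahlfors extension transplanted into this strip in logarithmic coordinates), and set $\phi$ equal to the identity on $\{|z| \geq 1 + \delta\}$. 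The resulting QC constant depends only on the bi-Lipschitz bound above, and its dilatation is supported in the chosen collar. The main substantive point is the brief chain computation that yields piecewise linearity; everything else (the rounding for $n_e$, the cyclic block bookkeeping, and the standard extension) is essentially mechanical, and the main obstacle to a clean write-up is organizing the notation so that blocks, sub-arcs, and their pairings remain transparent.
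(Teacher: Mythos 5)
Your chain computation giving $\phi^{-1}\circ g_e\circ\phi = L_{ek}$ on $\tilde I_e^{(k)}$ is fine, and the single-scale idea is appealing because it would eliminate the trapezoid maps in Section~4. But the construction has a genuine gap: the claim that the degree-$1$-or-$4$ and edge-length conditions force $|I_e|/|J_e|$ to be bounded above and below by \emph{absolute} constants is false, and your entire rounding step depends on it. What those conditions give (via the reflection argument in the preceding lemma) is that $|g_e'|$ is comparable to $|J_e|/|I_e|$ with absolute constants --- i.e.\ the ratio of the two one-sided harmonic measure densities is bounded \emph{pointwise along a single edge}. They say nothing about the ratio $|I_e|/|J_e|$ of the total one-sided masses, which the paper explicitly states is bounded only ``depending on the tree and the edge.'' Indeed, for a spiral- or comb-shaped union of dyadic squares the inner side of a deep edge has harmonic measure exponentially smaller than the outer side, while every vertex still has degree $1$ or $4$ and every edge still has length $2^{-D}$ or $2^{-D-2}$.

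As soon as $|I_e|/|J_e| > 4$ there is no integer $n_e$ with $n_e\ell$ within a factor of $2$ of both $|I_e|$ and $|J_e|$, so the partition you propose does not exist, and your bi-Lipschitz bound for $\phi|_{\circle}$ (hence the quasiconstant claim) collapses. This is precisely the obstruction the paper is built to handle: it permits the components of $\circle\setminus F$ to have \emph{different} dyadic lengths, so that the block over $I_e$ and the block over $J_e$ can use different scales, and then it enforces the factor-of-$2$ adjacency, the equal-neighbor, and the vertex-matching conditions by repeated subdivision (``split the larger neighbor,'' cover by maximal short dyadic arcs, split into four, etc.). Those varying lengths are what later become the varying ``heights'' in Section~4 and force the introduction of trapezoid maps; they are not an artifact you can normalize away by choosing $m$ large. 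You could salvage a version of your argument by replacing ``factor of $2$'' by ``factor of $C(T)$'' --- the lemma's ``uniformly bounded'' quasiconstant is allowed to depend on $T$, only not on $F$ or $\delta$ --- but then you would still need to reconcile the resulting unequal block lengths with the required interval constraints, which is essentially what the paper's multi-scale dyadic bookkeeping does.
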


\begin{proof}
Consider a pair $I,J$  of components of $\circle \setminus E$ that 
map to the same edge of $T$.
Subdivide  $I$ and on each subinterval,  let $\phi$ 
be defined as  $g$ followed
by the linear map from $J=g(I)$ back to $I$ that  inverts  $g$ at the 
endpoints. On the interval $ J=g(I)$, $ \phi$ is defined to be the 
identity.  Since $g$ is smooth,  $\phi$ is biLipschitz 
with constant as close to $1$ as we want if the subdivision of $I$
is fine enough.  We can therefore
extend it to a quasiconformal map of the Carleson region 
$$ Q_I = \{ z \in \disk^*: z/|z| \in I, |z|-1 < |I| \},$$
to itself that is the identity on $\partial Q_I \setminus I$.
Define $\phi$ on the rest of $\disk^*$ as the identity and
define it in $\disk$ by reflection.  This map has the desired
piecewise linear property, but we still need to adjust the sizes of 
the intervals. 

 To make 
adjacent intervals have comparable length with a factor of $2$,
 we simply split the 
larger in 2 equal pieces whenever this fails; the 
shortest interval will never be split and a shorter interval 
will never be produced, so the process ends 
after a finite number of steps. 

To make notation easier, we normalize arclength on the circle 
to be $1$.
 To make sure that the normalized interval lengths are  powers of
$2$, cover the circle by disjoint dyadic intervals
that are at most $1/4$ as long as any of the  intervals from 
the collection that  they hit,   
and that are  maximal with respect to this property. 
Such a dyadic interval has at least $\frac 18$th of the length 
of the shortest interval  it hits, and is contained in the 
union of this interval and one of its neighbors, which   is 
at most twice as long. Thus each of our dyadic intervals 
has length between $\frac 14 $ and  $\frac 1{16}$ times 
the  length of  any interval in our collection  that it  
intersects.

If we replace each interval $I$  in ${\cal I}$ by the  union of 
dyadic intervals  
in ${\cal D}$  that  are contained in  $I$  or contain $I$'s left 
endpoint,  the new interval $I'$ has comparable length and 
is a union of between $4$ and $16$ dyadic intervals. By splitting 
some of the dyadic intervals in two, we can insure it is always 
a union of $16$ dyadic intervals.

If necessary, we can repeat the ``split the larger neighbor''
argument to insure adjacent intervals have lengths within a factor of 
$2$ of each other. We end by splitting every interval
into four equal subintervals 
to make sure every interval has at least one equal sized neighbor.
If $I$ and $J$ are both  adjacent to a point mapping to a vertex, 
 but are  not of equal length, then one is exactly 
twice as long as the other. Subdivide the longer one and the 
adjacent interval of the same length. Then 
the two segments adjacent to the vertex preimage are equal and 
all the intervals  still satisfy all the other requirements.
This final collection is the desired collection ${\cal I}$.
\end{proof} 

\section{The construction on $T$}

In this section, we define a  tree $T'$ containing $T$ and 
a series of quasiconformal maps 
$$  \complex \setminus T = \Omega \to \Omega_0 \to 
    \Omega_1 \to \Omega_2 \to \Omega_3 \to 
     \Omega_4 = \complex \setminus T'. $$
If we denote the composition by $\psi$, then our construction will 
have the property that $T'$ is a QC-balanced tree via the 
map  $\psi \circ f \circ \phi: \disk^* \to \complex \setminus T'$. 
Moreover, the dilatation of this map will be uniformly bounded and 
the support of its dilation is mapped into as small a neighborhood 
of $T$ as we wish (equivalently, the inverse
map, which automatically has the same quasiconstant, has dilatation 
supported in an arbitrarily small neighborhood of $T$).
  Thus using Lemmas \ref{QC 2} and \ref{QC 1}
will yield a conformally 
balanced tree that approximates $T$, and hence $K$.

To simplify, we will rescale $T$ to correspond 
to a unit grid (i.e., take $D=0$).

In order to draw simpler pictures, we want to avoid the corners
in $T$ created by the vertices of degree $4$. The first map 
$\psi_0: \Omega \to \Omega_0 \subset \Omega$ simply pulls the domain 
way from these corners in a  uniformly QC way.  
Choose $ 0< \delta \ll 1$ to be a small power of $2$ (how small 
will be determined during the course of the construction) and 
for each degree $4$ vertex in $T$ remove  the four 
 $\delta \times \delta$ subsquares  of $\Omega$ that have 
this vertex as a corner.  This gives $\Omega_0$. 
Let $\Omega'$ be $\Omega$ with slits of length $\sqrt{2} \delta$ 
bisecting each corner of $\Omega$  removed (these are diagonals of 
the squares we just removed).  
There is a uniformly quasiconformal map $\psi_0: \Omega' 
\to \Omega_0$ that is affine on each edge and equals the 
identity outside a $\delta$-neighborhood of $T$. See 
Figure \ref{Omega0}. (Note that $\psi_0$ is not quasiconformal  on 
$\Omega$ because it is not continuous along the slits 
defining $\Omega'$, but we will finish the construction 
by composing with $\psi_0^{-1}$ to ``fill in'' the corners 
and the composed map will have a continuous, quasiconformal 
extension to all of $\Omega$.)

\begin{figure}[htb]
\centerline{
	\includegraphics[height=2in]{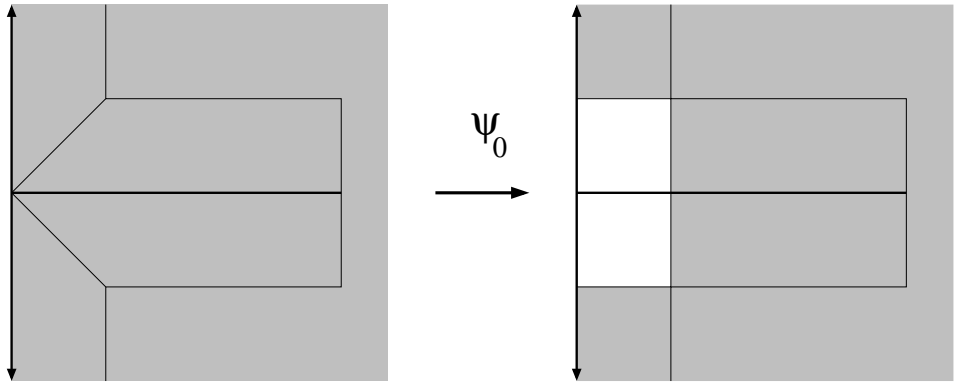}
}
\caption{ \label{Omega0}
The map   $\psi_0 : \Omega \to \Omega_0$. It pulls the domain 
away from the corners.
}
\end{figure}

Now we define $\Omega_1$ as the set of points  $z \in \Omega_0$ such that
$$ \dist(z, T) >  \delta \quad \text { or } \quad \dist(z, T) > \sqrt{2} \dist(z, V_1),$$
where $V_1$ is the finite set of degree $1$ vertices of $T$.
Thus $\Omega_1$ is a polygon where most of the edges are parallel
to edges of $T$, except in a neighborhood of each degree one
vertex where the boundary slopes down to hit $T$ at the vertex.
We can clearly map $\Omega_0 \to  \Omega_1$ by a uniformly QC
map with dilatation supported in a $\delta$-neighborhood of $T$.
See  Figure \ref{Omega1}. 
If $\delta$ is small enough then any  interval of length $\delta$
with 
one endpoint at a degree $1$ vertex of $T$ is contained in the 
image of the two ${\cal I}$ intervals on the circle that 
are  adjacent to that vertex. 
Assume $\delta$ has been chosen small enough to make this happen 
at every degree $1$ vertex.

\begin{figure}[htb]
\centerline{
	\includegraphics[height=2in]{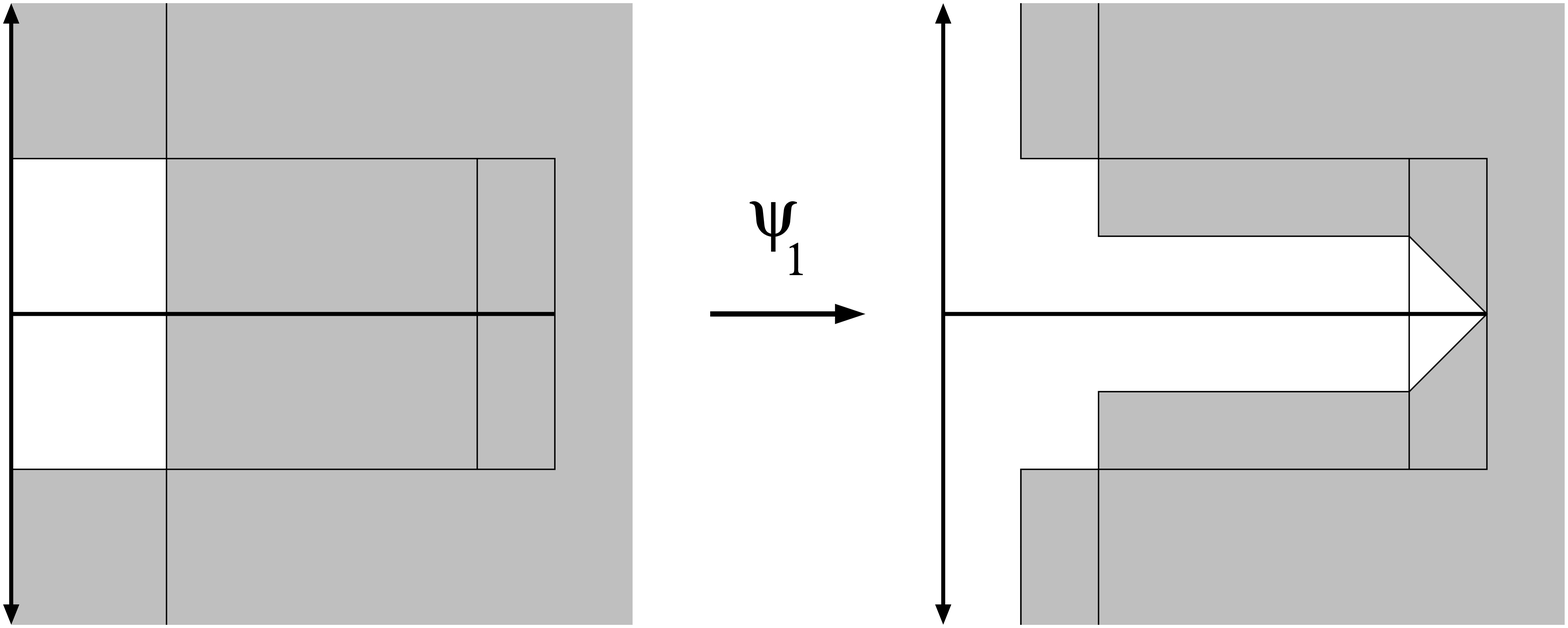}
}
\caption{ \label{Omega1}
The map $\psi_1 : \Omega_0 \to \Omega_1$.
}
\end{figure}

Let ${\cal J}$ denote the segments in $T$ that are of the 
form $\partial \Omega_0 \cap e$ for some edge $e$ of $T$. 
Each edge $e$ of $T$ either connects two vertices of 
degree four or connects a vertex of degree four to a vertex 
of degree one. In the first case, segments  in ${\cal J}$
consist of $e$ with two intervals of length $\delta$ removed 
(one at each endpoint), and in the second case we only remove 
an interval at the degree four vertex. 

  The map $\psi_0 \circ f \circ \phi^{-1}$ sends 
each element of ${\cal I}$ into some element of ${\cal J}$.
Since each element of ${\cal I}$ has measure that is a power 
of $2$, there is a smallest and largest power that occur and 
we denote these by $2^{-n}$ and $2^{N-n}$.
Then the measure of each element that occurs can
 be written as $2^m 2^{-n-N}$ where 
$   N \leq m \leq 2N$. By taking $N$ larger, if necessary, we can 
assume $2^{-n-N } $ evenly divides $1-2 \delta$ and $\frac 14 -\delta$ 
(the two possible lengths of edges in ${\cal J}$). Thus each element of 
${\cal J}$ can be divided into an integer number of    
disjoint sub-segments of length $ 2^{-n-N} $. This collection 
of subintervals is called ${\cal K}$. 
Taking $N$ larger, if necessary, we may assume $ 2 N 2^{-n-N} < \delta$.

 Each element  $K \in {\cal K}$  is 
associated to two elements of ${\cal I}$ whose images contain  $K$ and 
that correspond to the two sides of $K$.
If the measure of one of these intervals is $2^m 2^{-n-N}$ we 
call $m$ one of the two ``heights'' associated to $K$. Each height 
is associated to one side of $K$.
Lemma \ref{intervals} implies that the heights of intervals in $K$
do not change very quickly. In fact, that lemma implies the 
following facts about intervals in ${\cal K}$: 
\begin{enumerate}
\item adjacent intervals have heights differing by at most $1$,
\item every interval has the same height as at least one of its neighboring 
      intervals, 
\item given a degree $1$ vertex $v$ of $T$, every interval within distance 
     $\delta$ of $v$ has the same height,
\item given one of the $\delta \times \delta$ squares removed from $\Omega$ to 
      form $\Omega_0$, the two intervals adjacent to that square have the same 
      height.  They also have the same heights as the neighboring intervals 
      that are not adjacent to the removed square.
\end{enumerate}

Next we build $\Omega_2$.
For each segment $K \in {\cal K}$ in $\partial \Omega_0$
 we add a rectangle or trapezoid to 
both sides as follows. First suppose $K =[a,b]$ is within distance
$\delta$ of a degree $1$ vertex $v$. This means that 
the heights of $K$ for either side are the same by 
Lemma \ref{intervals} if $\delta$ has been chosen small enough (since intervals 
adjacent to a vertex of the tree have equal measure). 
If  $m$ denotes the height associated to $K$, and if 
 $$ m |K| \geq  \dist(K,v_1) +|K|.$$
then we add a rectangle of size $|K| \times m |K|$ to both sides of $K$. 
Otherwise we add a trapezoid with one side $K$, two sides 
perpendicular to $K$ and the fourth side on $\partial \Omega_1$.
See Figure \ref{Omega_2}. 

\begin{figure}[htb]
\centerline{
	\includegraphics[height=2.5in]{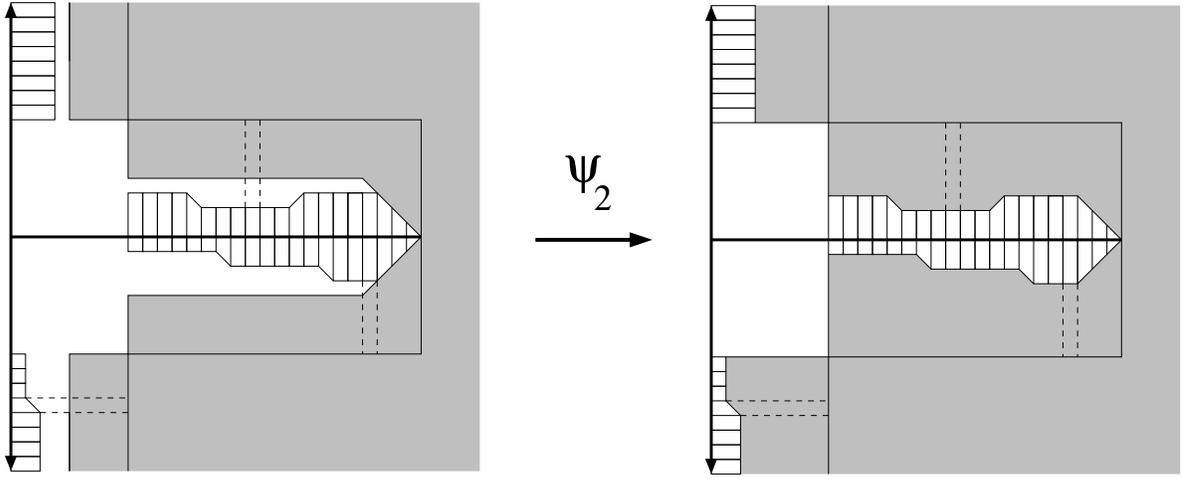}
}
\caption{ \label{Omega_2}
We map $\psi_2:\Omega_1 \to \Omega_2$ by pushing the boundary 
back towards $T$.  The map is the identity for points 
more than $ \delta$ from $T$.
}
\end{figure}

If $K$  is more than distance $\delta$ from any degree one vertex
then consider one side of  $K$ and the two adjacent intervals. If all 
three intervals have the same height $m$, then we add a  $|K| \times m|K|$ 
rectangle with $K$ as one side. Otherwise, one of the adjacent
intervals has the same height $m$ as $K$ and the other has height 
$m^*$ differing by $1$. We add a trapezoid with  base $K$, and two 
parallel sides that are perpendicular to $K$   with side 
lengths  $m|K|$ and $m^* |K|$. The fourth side of the trapezoid is opposite 
$K$ and has length $\sqrt{2} |K|$.

If $K$ has only one neighbor, it must be adjacent to one of the 
removed  ``corner squares'' of $\Omega_0$. 
As noted earlier, it must have the same height as 
its immediate neighbor, as well as the other  interval of ${\cal K}$ 
 adjacent to the same corner  square.

Let $W$ be the union of all these closed rectangles and trapezoids, 
together with the closures of  $ \delta \times  \delta$ squares removed
from $\Omega$ to form $\Omega_0$. The union is a closed connected set 
and the complement is the open set $\Omega_2$.  Clearly $\Omega_1$ 
can be mapped to $\Omega_2$ by a quasiconformal map $\psi_2$  that
is piecewise affine, has uniformly bounded quasiconstant and has 
dilatation supported in a $\delta$-neighborhood of $T$. 
See Figure \ref{Omega_2}. 

If we add all the  open rectangles and trapezoids to $\Omega_1$, 
along with their edge on  $\partial \Omega_2$,
 we get an open set $\Omega_3$ containing 
$\Omega_2$.  We define $\Omega_4 = \psi_0^{-1} (\Omega_3)$ and 
$T' = \partial \Omega_r$. Clearly this is a linear tree
that contains $T$.
See Figure \ref{FillCorners2}.

\begin{figure}[htb]
\centerline{
	\includegraphics[height=1.5in]{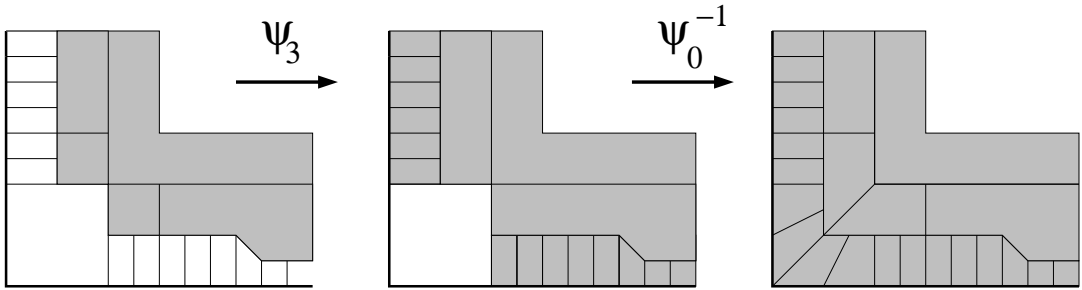}
}
\caption{ \label{FillCorners2}
The map  $\psi_3: \Omega_2 \to \Omega_3$ fills in rectangles and trapezoids 
and then  $\psi_0^{-1} $  ``refills'' the corners.  The 
composition $ \psi = \psi_0^{-1} \circ \psi_3 \circ \psi_2 \circ \psi_1 \circ \psi_0$
has uniformly bounded dilatation, is the identity outside a $\delta$-neighborhood 
of $T$ and  has a continuous extension $\Omega \to \Omega_4$.
}
\end{figure}

The only object not yet defined is the quasiconformal map 
$\psi_3 : \Omega_2 \to \Omega_3$. Again, the map is the 
identity far from $T$, and each connected component of 
$\Omega_3 \setminus \Omega_2$ is a rectangle or a trapezoid 
(we will denote either type of region by $R$) 
 and is  the image under $\psi_3$ 
 of a square in $\Omega_2$ that shares  a side  with 
$R$. See Figure \ref{FillTubes}.
 There are three types of maps to describe: $m$-rectangle maps, 
$m$-trapezoid maps and $m$-tip maps.
Each of these maps takes a region in $\Omega_2$ (either a triangle 
or square) with one boundary segment    $I$ on $\partial \Omega_2$
 and expands it into the component of 
 $\Omega_3 \setminus \Omega_2$ attached along $I$ (this component is 
either a rectangle, a trapezoid or a triangle). See Figure 
\ref{FillTubes}. Each map is the identity on 
$\partial R \cap \Omega_2$, so the map can be extended as 
the identity to the rest of the plane.
 We will describe each type of map separately.

\begin{figure}[htb]
\centerline{
\includegraphics[height=3.4in]{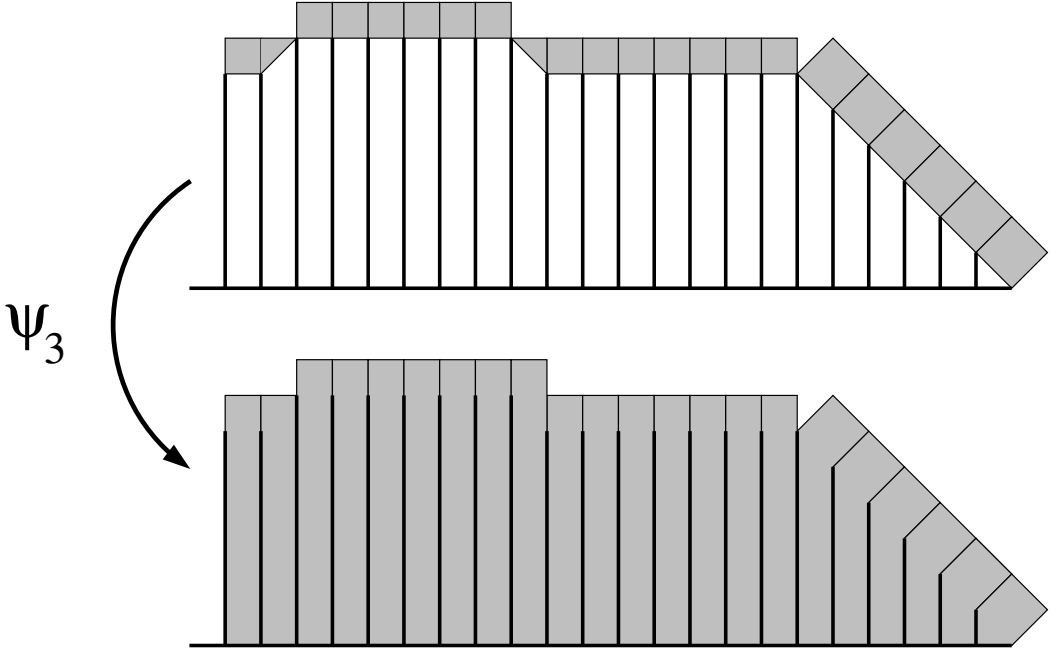}
}
\caption{ \label{FillTubes}
The map $\psi_3 : \Omega_2 \to \Omega_3$ is made up of three 
types of maps that expand a square or triangle in $\Omega_2$ 
into  a rectangle or trapezoid in 
$\Omega_3 \setminus \Omega_2$. 
}
\end{figure}
\begin{figure}[htb]
\centerline{
\includegraphics[height=2in]{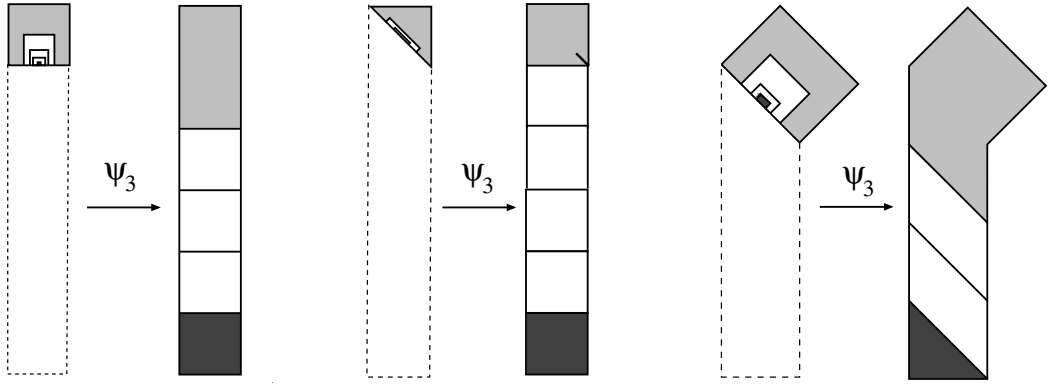}
}
\caption{ \label{MapTypes}
This shows in more detail how the map in Figure \ref{FillTubes} 
expands $\Omega_2$ into $\Omega_3$. In each case the domain 
is cut into decreasing, nested pieces and the pieces are
expanded to shapes that fill the image. The boundary expansion on 
the $k$th piece is $2^k$ in a sense that is made precise 
in the text. The details of each type of map are shown in 
Figures \ref{RectMap}, \ref{TrapMap} and \ref{TipMap}. 
}
\end{figure}

{\bf Rectangle maps:} 
An $m$-rectangle map sends a unit square $S$ to a  $ 1 \times m$ rectangle
$R$. 
We write $R$ as a union of $m$ adjacent unit squares $R  = \cup_{k=1^m} S_k$ 
with  $S_1 = S$.
The boundary values of the map are  as follows.  The map is the 
identity on $\partial S \cap \partial R$ (this is three sides of the 
square) and the fourth side is mapped to the rest of $R$.
starting at the endpoints,  divide the fourth side symmetrically 
 into  two intervals of lengths $4^{-k}$ for $k=1, \dots , m$ (the 
longest adjacent to the endpoints, the shortest adjacent to the midpoint).
For $k = 1, \dots m$  intervals of length $4^{-k}$ are mapped 
affinely to  the part of $\partial S_k$ on the long sides of $R$. 
The  union of the two intervals of length $2^{-m}$ is  mapped affinely 
to the short side of $R$ on $\partial S_k$.  That these 
boundary values can be attained by a uniformly quasiconformal map 
is apparent from the diagrams in  Figures   \ref{MapTypes} and \ref{RectMap}. 

The first figure 
shows how to subdivide the square into $m-1$ nested polygonal regions $P_1, 
\dots , P_{m-1}$;   $P_1$ 
maps to a $1 \times 2$ sub-rectangle in $R$, $P_2, \dots ,P_{m-2}$ are all similar
to each other 
and map to squares,  and $P_{m-1}$ is a square mapping to a square (but not in the 
obvious way, since one of its sides must map to three sides in the image).
These three maps  are  constructed 
in Figure \ref{RectMap}  by showing compatible triangulations for domains 
and ranges (i.e., the triangulations are in a  1-1 correspondence that 
preserves adjacency). Given compatible triangulations of two regions we 
can define a quasiconformal map between them by taking the obvious piecewise 
affine maps between triangles.  It is now an easy exercise to check that the 
mappings induced by the triangulations have the boundary values described 
above.

\begin{figure}[htb]
\centerline{
\includegraphics[height=2.1in]{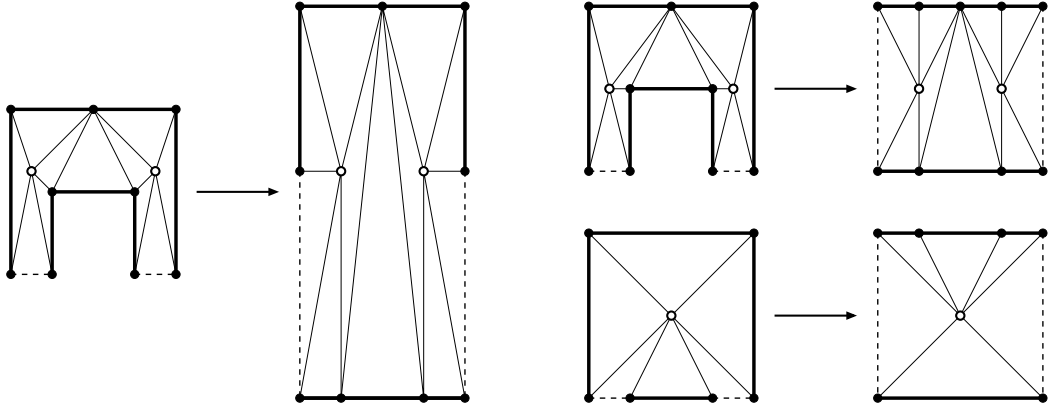}
}
\caption{ \label{RectMap}
The $m$-rectangle map sends a square to a $1 \times m$  rectangle. The 
map is composed from three types of pieces: one each 
for mapping onto  the ``top''
box  (lighter shading in Figure \ref{MapTypes})  and
``bottom'' (darker)  and another that is repeated in all
 the ``middle'' boxes  (white). The triangulations 
define piecewise affine maps between the polygons.
}
\end{figure}

{\bf Trapezoid maps:} 
A $m$-trapezoid map also maps into  a $1 \times m$ rectangle
 $R$ as above, but the domain  of this map
is now a right triangle that we may identify with one half of the 
top square cut by a diagonal. The boundary map is the identity 
on the legs of this triangle. There is an asymmetry to the construction 
and we assume the picture is as shown in Figure \ref{MapTypes}, so that
the domain of the map is the upper right half of the top square.
 The hypotenuse  of the triangle is divided into pairs  of  intervals 
of size $ 4^{-k} $, $k=1, \dots$ as before and the left half is mapped 
to the left side  of the rectangle as before. On the right side the 
rightmost  interval has length $\frac 14$ and is folded onto itself to 
form a slit of length $1/8$  in the rectangle; this slit is not in the 
image of the interior. The remaining smaller intervals are mapped to 
the right side of the rectangle just as before. Figure \ref{TrapMap} shows 
how to divide the triangle into regions and map these regions into the 
rectangle. We only show the details for the top piece; the  lower pieces
are affinely stretched to be similar to the rectangle map pieces  and 
then mapped exactly as in the rectangle maps. 

\begin{figure}[htb]
\centerline{
\includegraphics[height=1.5in]{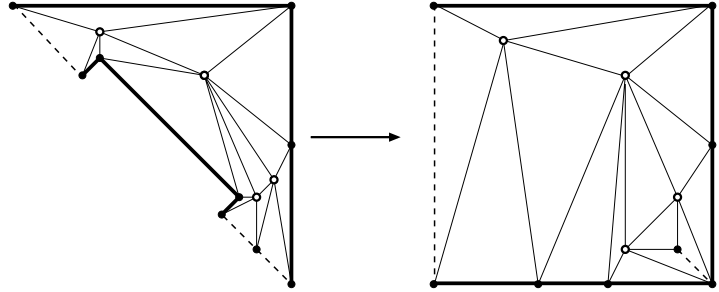}
}
\caption{ \label{TrapMap}
The $m$-trapezoid map sends the triangle to a $1 \times m $
rectangle. The middle and bottom maps are the same, after an 
affine stretching, to the middle and bottom maps of the 
rectangle map, so we only show the construction for the 
top piece. Note that the image is not a simple polygon; 
a piece of the boundary is folded onto itself to form a 
slit in the image. This is necessary for the trapezoid map 
to match rectangle maps of different heights on either side. 
}
\end{figure}

Note that $m$-trapezoid maps interpolate between $m$-rectangle  maps and 
$(m-1)$-rectangle maps. The boundary segments of $\partial \Omega_2$ corresponding 
to each map have measure $2^m 2^{-n-M}$. Dividing these segments into 
$2^{m+1}$ equal, disjoint subsegments and applying the ``filling map'' partitions 
the sides and bottom of the rectangular image into intervals. Whenever two rectangles 
or trapezoids share a side, we want the partitions of these sides to be 
identical. Each piece of the partition is one edge of our QC-balanced tree and
we want them to have equal measure. Obviously two rectangles maps of the same 
height match up and the definition of the $m$-trapezoid map  is designed so that 
it matches a $m$-rectangle map on one side and a $(m-1$)-rectangle map on the other.
The top side of an $(m-1)$-rectangle map has half the measure of the top 
of a $m$-rectangle, so the trapezoid map matches up intervals of equal 
measure.

{\bf Tip maps:}
The third type of map is the $m$-tip map. The details are described 
in Figure \ref{TipMap}. Each $m$-tip map is designed to match a 
$(m+1)$-tip map (or a $m$-rectangle map) on its longer vertical side and a 
$(m-1)$-tip map on its shorter vertical side. Once again the boundary map 
is the identity on the top three sides of the domain, and maps the 
bottom side to the sides and bottom of the image trapezoid. The bottom 
side of the domain square is again divided into symmetric 
pairs of  intervals of length 
$  4^{-m}$. The leftmost and rightmost are mapped to the unit 
segments of the trapezoids vertical sides, but since these 
sides are different lengths, the images are displaced vertically 
with respect to each other, so they form the vertical sides 
of a parallelogram. Intervals of length $4^{-k}$ are mapped to 
vertical sides of the lower parallelograms. After $m$ steps, 
the parallelogram hits the bottom edge and the rest of the 
domain square is mapped to the bottom triangle as illustrated in 
Figure \ref{TipMap}. The last map, adjacent to the tip, is a 
special case that is illustrated in Figure \ref{TipMap}. 

\begin{figure}[htb]
\centerline{
\includegraphics[height=2.5in]{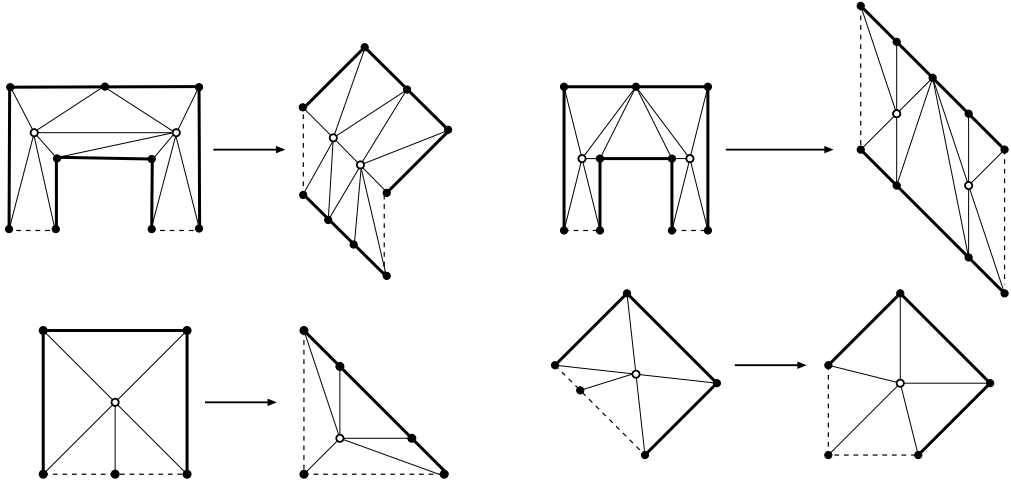}
}
\caption{ \label{TipMap}
The $m$ tip maps also fills in a trapezoid, but is different from 
a trapezoid map because it has to match other tip maps, not 
two rectangle maps of different heights. Again, it is built 
from three types of map: top, bottom and middle. Along its 
left side (for the orientation shown) it matches a $m$-rectangle 
map or the right side of a $(m+1)$-tip map and on the right it 
matches the right side of a $(m-1)$-tip map. The $1$-tip map 
requires a special construction, as shown at lower right.
}
\end{figure}

All the top intervals of the tip trapezoids have the same measure, so 
the tip maps match intervals of the same measure along the vertical 
sides. Tip maps for opposite sides of an interval $K \in {\cal K}$ 
are the same and such  intervals have the same measure from both sides, 
so the maps match here as well.

This completes the construction of the map $\psi: \Omega \to \Omega_4$ and 
the verification that $\psi \circ f \circ \phi^{-1}$ makes $T' $  a 
QC-balanced tree. The construction also clearly shows this map is 
uniformly quasiconformal and is conformal except on a small neighborhood of $T $.
In particular, the quasiconstant is independent
of $\delta$, and as $\delta \to 0$, the support of the dilatation is 
as small as we wish, so that the ``correction'' map obtained from the 
measurable Riemann mapping theorem is as close to the identity 
as we want.  This completes the proof of Theorem \ref{Main}.

\bibliography{truetrees}
\bibliographystyle{plain}

\end{document}